\numberwithin{equation}{section}
\theoremstyle{plain}
\newcommand{\al}{\alpha}
\newcommand{\be}{\beta}
\newcommand{\ga}{\gamma}
\newcommand{\de}{\delta}
\newcommand{\Si}{\Sigma}
\newcommand{\mc}{\mathcal}
\newcommand{\mb}{\mathbb}
\newcommand{\wt}{\widetilde}
\newcommand{\ZR}{\mathbb{R}}
\newcommand{\ppv}{\text{p.v.}}
\newcommand{\R}{\mathbb{R}}
\newcommand{\dist}{{\rm dist}}
\newtheorem{theorem}{Theorem}[section]
\newtheorem{definition}[theorem]{Definition}
\newtheorem{remark}[theorem]{Remark}
\newtheorem{lemma}[theorem]{Lemma}
\newtheorem{prop}[theorem]{Proposition}
\begin{document}
\title[Regional fractional Laplacian with multi-singular critical perturbation]{Heat kernel estimates for regional fractional Laplacians with multi-singular critical potentials
in $C^{1, \beta}$ open sets
}
\author[R. Song, P. Wu and S. Wu]{Renming Song, Peixue Wu and Shukun Wu}
\address{Renming Song\\ Department of Mathematics\\ University of Illinois Urbana-Champaign \\ Urbana \\ IL 61801\\ USA}
\email{rsong@illinois.edu}
\thanks{The research of Renming Song is supported in part by a grant from the Simons Foundation (\#429343, Renming Song)}

\address{Peixue Wu\\ Institute for Quantum computing, Department of Applied Mathematics\\ University of Waterloo\\ON, Canada}
\email{p33wu@uwaterloo.ca}
\thanks{The research of Peixue Wu is supported by the Canada First Research
Excellence Fund (CFREF)}
\address{Shukun Wu\\ Department of Mathematics \\ Indiana University\\ Bloomington \\ IN \\ USA }
\email{shukwu@iu.edu}

\begin{abstract}
Let $D$ be an open set of $\R^d$, $\alpha\in (0, 2)$ and let $\mathcal{L}_{\alpha}^D$ be the generator of the censored $\alpha$-stable process in $D$. In this paper, we establish sharp two-sided heat kernel estimates for $\mathcal{L}_{\alpha}^D-\kappa$,  with $\kappa$ being a non-negative critical potential and $D$ being a $C^{1, \beta}$ open set, $\beta \in ((\alpha-1)_+,1]$. The potential $\kappa$ can exhibit multi-singularities and our regularity assumption on $D$ is weaker than the regularity assumed in earlier literature on heat kernel estimates of fractional Laplacians.  
\end{abstract}

\maketitle
\section{Introduction}\label{s:intro}
The transition density of a Markov process $\{X_t\}_{t\ge0}$, when it exists, encapsulates all the statistical information about the process. For a few notable examples, such as Brownian motion, the Cauchy process, and the Ornstein-Uhlenbeck process, explicit formulas for the transition density are known. However, for most Markov processes, such explicit expressions are not available. The transition density, often referred to as the heat kernel of the process's generator, plays a central role in both analysis and probability theory. Consequently, deriving sharp two-sided estimates for heat kernels has become a significant focus of research in these fields.

Assume $\alpha\in (0, 2)$. An isotropic $\alpha$-stable process $\{Z_t\}_{t\ge0}$ on $\R^d$ is a  L\'evy process with
\begin{equation*}
\mb{E}\exp(i\xi \cdot (Z_t-Z_0)) = \exp(-t|\xi|^{\alpha}), 
\quad \xi \in \mb{R}^d, t\ge 0.
\end{equation*}
The generator of $Z$ is the  fractional Laplacian
$-(-\Delta)^{\alpha/2}$ which can be written as
$$
-(-\Delta)^{\alpha/2}f(x)=\mc{A}(d,-\alpha)\text{ p.v.}\int_{\R^d}\frac{f(y)-f(x)}{|y-x|^{d+\alpha}}dy,
$$
where
\begin{align}\label{e:constantA}
\mc{A}(d,-\alpha)=\frac{\alpha2^{\alpha-1}\Gamma((\alpha+d)/2)}{\pi^{d/2}\Gamma(1-\alpha/2)}.
\end{align}
It is well known (see \cite{BG60} for instance) that the heat kernel $p(t,x,y)$ of 
$Z$ has the following estimates:
there exists $\delta \in (0,1]$ such that
\begin{equation*}
\delta \big( t^{-d/\alpha} \wedge \frac{t}{|x-y|^{d+\alpha}} \big) \le p(t,x,y) \le \delta^{-1} \big( t^{-d/\alpha} \wedge \frac{t}{|x-y|^{d+\alpha}} \big),
\end{equation*}
for all $(t,x,y) \in (0,\infty) \times \mb{R}^d \times \mb{R}^d$. 
In this paper, we will use
$A\asymp B$ to denote that $A/B$ is bounded between two positive constants. So the estimates above can be written as $p(t,x,y)\asymp t^{-d/\alpha} \wedge \frac{t}{|x-y|^{d+\alpha}}$. 

Suppose $D$ is an open subset of $\mb{R}^d$ and $Z^D$ is the killed $\alpha$-stable process in $D$. Using the strong Markov property one can easily show that $Z^D$ has a heat kernel $p^{D}(t,x,y)$. Due to complication near the boundary, sharp two-sided estimates of $p^{D}(t,x,y)$ were established only recently in \cite{CKS10}. The main result of \cite{CKS10} is as follows. 
If $D\subset \mb{R}^d$ is a $C^{1,1}$ open set, then $p^D$ admits the following estimates:
\begin{itemize}
\item 
For any $T>0$, on $(0,T)\times D \times D$, 
\begin{equation}\label{e:3}
p^{D}(t,x,y) \asymp \left(1 \wedge \frac{\delta_{D}(x)}{t^{1/\alpha}}\right)^{\frac{\alpha}{2}} \left(1 \wedge \frac{\delta_{D}(y)}{t^{1/\alpha}}\right)^{\frac{\alpha}{2}} p(t,x,y),
\end{equation}
where $\delta_D(x)$ denotes the distance between $x$ and $D^c$.
\item 
Suppose in addition that  $D$ is bounded. Then for any $T>0$, on $[T,\infty) \times D \times D$, 
\begin{equation}\label{e:4}
p^{D}(t,x,y) \asymp e^{-\lambda_1 t} \delta_D(x)^{\frac{\alpha}{2}}\delta_D(y)^{\frac{\alpha}{2}},
\end{equation} 
where $-\lambda_1$ is the largest eigenvalue of 
the generator of $Z^D$.
\end{itemize}
In the case when $D$ is only assumed to a $\kappa$-fat open set, 
the following sharp two-sided small time estimates for 
$p^D$ were obtained 
in  \cite{Bogdan10}: for any $T>0$, on $(0,T)\times D \times D$,
\begin{equation}\label{factorization type}
p^D(t,x,y)\asymp \mb P^x(\tau_D>t)\mb P^y(\tau_D>t)p(t,x,y),
\end{equation}
where $\tau_D$ is the first exit time of $Z$ from $D$. This kind of estimates can be regarded as an approximate factorization of $p^D(t,x,y)$. It is also known as a Varopoulos-type 
estimate,  since Varopoulos first observed this phenomenon for killed Brownian motion in \cite{Varopoulos03}.
This kind of approximate factorization has been generalized to more general symmetric Markov processes, see, for instance, \cite{CKS14}. 
In \cite{CKSV20}, this kind of approximate factorization has been generalized to a large class of 
 jump processes with critical killing.  We mention in passing that getting explicit sharp two-sided estimates from the 
 approximate factorization
 is no easy task, and that it is impossible to establish explicit sharp two-sided estimates for the Dirichlet heat kernel $p^D(t, x, y)$ for a general $\kappa$-fat open set $D$. 
 
 In the case when $D$ is a $d$-set, sharp two-sided heat kernel estimates for the reflected $\alpha$-stable process in $\overline{D}$ were  obtained in \cite{CK03}. In \cite{CKS10b}, sharp two-sided heat kernel estimates for the censored
 $\alpha$-stable process in $C^{1, 1}$ open sets were established.

The main purpose of this paper is to establish sharp explicit 
two-sided heat kernel estimates 
for a large class of processes which are 
obtained from Feynman-Kac transforms of censored $\alpha$-stable processes with multi-singular critical killing potentials
in $C^{1, \beta}$ open set $D$, $\beta \in ((\alpha-1)_+,1]$.  

The setup of this paper is as follows. Assume $d\ge 2$ and $\alpha\in (0, 2)$. Suppose that $D$ is a $C^{1, \beta}$
open set in $\R^D$ with $\beta \in ((\alpha-1)_+,1]$.  See Section \ref{sec:preliminary} for the definition of $C^{1,\beta}$ open set. The boundary of $\partial D$ may not be connected. We assume
that 
\begin{align}\label{assumption:boundary}
    \partial D = \bigcup_{j=1}^{m_0} \Si_j,
\end{align}
where $m_0\ge 1$,  and $\Si_j$, $1\le j\le m_0$, are the connected components of $\partial D$. 
Let $X$ be the reflected $\alpha$-stable process in $\overline{D}$. Let $\tau_D=\inf\{t>0: X_t\notin D\}$ be the
first exit time of $D$ for $X$.  The killed process $X^D$ is defined by $X^D_t=X_t$ if $t<\tau_D$ and $X^D_t=\partial$
if $t\ge \tau_D$, where $\partial$ is a cemetery point. $X^D$ is the censored $\alpha$-stable process defined in
\cite{BBC03}.  Let $\mathcal{L}_{\alpha}^D$ be the generator of $X^D$. 

For $x\in \R^d$, we use $\delta_{\Si_j}(x)$ to denote the distance between $x$ and $\Si_j$. We assume that
\begin{align}\label{def:killing function}
    \kappa(x) = \sum_{j=1}^{m_0} \lambda_{j} \delta_{\Sigma_{j}}(x)^{-\alpha} + \sum_{l=1}^{m_1} \mu_l |x-x_l|^{-\alpha},
    \quad x\in D,
\end{align}
where (i) 
$m_1$ is a non-negative integer, $\{x_l\}$ are distinct points in $D$, 
 and for $1\le l\le m_1$, $\mu_l$ is a positive constant; (ii) for $1\le j\le m_0$,
$\lambda_j$ is a non-negative number which is further assumed to be positive in the case $\alpha\in (0, 1]$. 
For any non-negative Borel function $f$ on $D$, we define
\begin{align}\label{e:semig}
T_t^{D, \kappa} f(x):=\mb E^x\left(e^{-\int^{t\wedge \tau_D}_0\kappa(X_s)ds}f(X^D_t)\right), \quad t\ge 0, x\in D.
\end{align}
$(T^{D, \kappa}_t)_{t\ge 0}$ is a sub-Markov semigroup and it is called the Feynman-Kac semigroup of $X^D$ with killing potential $\kappa$. The generator of this semigroup is $\mathcal{L}_{\alpha}^D-\kappa$, where $\kappa$ is understood as a multiplication operator. Let $Y$ be the Hunt process associated with 
$(T^{D, \kappa}_t)_{t\ge 0}$ and let $\zeta$ be the lifetime of $Y$. The state space of 
$Y$ is $D_{\kappa}=D\setminus\cup^{m_1}_{l=1}\{x_l\}$. One can easily check that $Y$ has a transition density $q^{D, \kappa}(t, x, y)$. 
Our main result is as follows:

\begin{theorem}\label{thm:main}
The heat kernel $q^{D, \kappa}(t,x,y)$ admits the following sharp two-sided estimates: for any $T>0$, 
\begin{equation}
q^{D, \kappa}(t,x,y) \asymp B_{\kappa}(x,y) \left(t^{-\frac{d}{\alpha}} \wedge \frac{t}{|x-y|^{d+\alpha}}\right) \quad \mbox{on } (0,T) \times D_\kappa \times D_\kappa,
\end{equation}
where 
$$
     B_{\kappa}(x,y) = \prod_{j=1}^{m_0} \left(1\wedge \frac{\delta_{\Si_j}(x)}{t^{1/\alpha}}\right)^{p_{j}}\left(1\wedge \frac{\delta_{\Si_j}(y)}{t^{1/\alpha}}\right)^{p_{j}} \prod_{l=1}^{m_1}  \left(1\wedge \frac{|x - x_l|}{t^{1/\alpha}}\right)^{q_{l}}\left(1\wedge \frac{|y-x_l|}{t^{1/\alpha}}\right)^{q_{l}},
$$
with $p_{j}$ determined by $C_0(d,\alpha,p_{j}) = \lambda_{j}$ and $q_l$ determined by $C_1(d,\alpha,q_{l}) = \mu_{l}$, 
where $C_0,C_1$ are defined in \eqref{constant 1: C_0(d,p)} and \eqref{constant 2: C_1(d,p)}.
\end{theorem}

\begin{remark}{\rm Theorem \ref{thm:main} generalizes the main results of \cite{CKSV20, JW20} in the sense that we allow
multi-singular critical potentials.
}
\end{remark}

\begin{remark}{\rm 
In the case $\alpha\in (1, 2)$ and $\kappa\equiv 0$, Theorem \ref{thm:main} generalizes the heat kernel estimates for
censored $\alpha$-stable processes to $C^{1, \beta}$ open set $D$ with  $\beta \in ((\alpha-1)_+,1]$. Even in this special case, our result is new. 
}
\end{remark}

\begin{remark}
{\rm 
As in \cite{CKSV20} (see the definition of $\mathcal{H}_\alpha$ on \cite[p. 234]{CKSV20} and the definition of $\mathcal{G}_\alpha$ on \cite[p. 250]{CKSV20}), we could allow a lower order perturbation in each of the terms in the definition of $\kappa$. 
We do not consider this more general case since the extension is pretty routine.
Had we allowed this routine extension, as a consequence of Theorem \ref{thm:main} in the case $m_1=0$ and $\lambda_1=\cdots=\lambda_{m_0}$, we could
get sharp explicit two-sided heat kernel estimates for the killed $\alpha$-stable processes in  $C^{1, \beta}$ open set $D$ with  $\beta \in ((\alpha-1)_+,1]$.  See \cite[Remark 3.4]{CKSV20}.  In \cite{Kim14, Kim19}, the sharp explicit two-sided heat estimates for $Z^D$ were extended from
$C^{1, 1}$ open set to $C^{1, \beta}$ open set with $\beta\in (\alpha/2, 1]$.  The condition $\beta \in ((\alpha-1)_+,1]$
is weaker than $\beta\in (\alpha/2, 1]$.
}
\end{remark}

\begin{remark}
{\rm Theorem \ref{thm:main} can be regarded as the non-local counterpart of the main result of \cite{Filipas09}. In
\cite{Filipas09}, the boundary of the open set $D$ is assumed to be the finite union of finitely many distinct boundary-less 
$C^2$ hypersurfaces of co-dimension between 1 and $d-1$. In the present paper, every component of $\partial D$ is of co-dimension 1. We have not been able to extend our argument to deal with the case that some components of $\partial D$
are of co-dimension between 2 and $d-1$.  
}
\end{remark}

\begin{remark}{\rm 
Recently, \cite{Antonio24, Florian24, KM24} confirmed that weaker boundary regularity is possible for various problems related to nonlocal operators. However, they have not considered heat kernel estimates yet.
}
\end{remark} 
The two main contributions of this paper are as follows. (i) Compared with \cite[Theorem 3.2]{CKSV20}, we weakened  the $C^{1, 1}$ regularity on $D$ to $C^{1, \beta}$ with $\beta \in ((\alpha-1)_+,1]$. This is a natural assumption since the generator of our process is of order $\alpha$. (ii) Compared with \cite{CKSV20, JW20}, we allow multi-singular critical killing potentials. 

The technique of this paper is a mixture of probabilistic and analytic arguments. We use a perturbation argument to construct some appropriate barrier functions and then we use Dynkin's formula to get sharp explicit two-sided estimates for the survival probability of $Y$.  Once this is done, we can use approximate factorization result of 
of \cite[Theorem 2.23]{CKSV20} to get the conclusion of Theorem \ref{thm:main}.

The rest of this paper is organized as follows. In Section 2, we give some preliminaries, including the Varopoulos-type heat kernel estimates developed in \cite{CKSV20}. We also recall the harmonic profiles for the 
operator $\mathcal{L}_{\alpha}^D-\kappa$ in the case $D$ is the upper half-space and $\kappa(x)=\lambda x_d^{-\alpha}$,  and the case $D=\R^d$ and $\kappa(x)=\mu|x|^{-\alpha}$. In Section 3, we present a perturbation argument for $C^{1,\beta}$  open sets, $\beta \in ((\alpha-1)_+,1]$, the key estimates are given by Lemmas \ref{key lemma: distance comparison} and \ref{key prop: perturbation}. In Section 4, we give the proof of Theorem \ref{thm:main}. Two technical results are proved in the Appendix.

In this paper, we use the following notational conventions. Capital letters such as $ A_i, C_i, R_i$ denote  constants whose values remain fixed throughout the paper. When we specify the dependence of the constants, we omit the dependence on $\alpha$ and $d$. 
Lowercase letters such as $ c_i, r_i$ represent constants whose value may 
vary from the statement and proof of one result to another. For two functions $f$ and $g$,  
        $f \asymp g $ means that there exist constants $c_1 > c_2 > 0$ such that
        $c_2 g \leq f \leq c_1 g$;
        $ f \lesssim g $ means that there exists a constant $ c > 0$ such that
        $f \leq c g$.
        
\section{Preliminaries}\label{sec:preliminary}

Throughout this paper, we assume $d\ge 2$ and $\alpha\in (0, 2)$. For any $r>0$ and $r_1>r_2>0$, $x\in D$, we define
$B(x,r)=\{z\in \R^d: |z-x|<r\}$ and  $\mc A(x,r_1,r_2):= \{z\in \R^d: r_2<|z-x|<r_1\}$.
We first recall the definition of $C^{1, \beta}$ open sets. 

\begin{definition}\label{d:c1beta}
Let $\beta\in (0, 1]$. An open set $D\subset\R^d$ is said to be a $C^{1,\beta}$ open set if there exist a localization radius 
$R_0>0$  and a constant
$\Lambda > 0$ such that for every $z \in \partial D$, there exist a $C^{1,\beta}$ function $\psi (\cdot)=\psi_z(\cdot): \mathbb{R}^{d-1} \to \mathbb{R}$ satisfying $\psi(\wt z)= 0, \nabla \psi(\wt z) = 0$, $||\nabla \psi||_{\infty} \le \Lambda$,  
\begin{equation} \label{C1 beta function}
||\psi||_{1,\beta}:= \sup_{\widetilde{x} \neq \widetilde{y} \in \mathbb{R}^{d-1}} \frac{|\nabla \psi(\widetilde{x})-\nabla \psi(\widetilde{y})|}{|\widetilde{x}- \widetilde{y}|^{\beta}} \le \Lambda 
\end{equation}
and an orthonormal coordinate system $CS_z$ with its origin at $z$ such that
\begin{equation} \label{local coordinate system}
B(z,R_0) \cap D = B(z,R_0) \cap \{y= (\widetilde{y},y_d) \in CS_z: y_d > \psi(\widetilde{y})\}.
\end{equation}
\end{definition}

$(R_0, \Lambda)$ is called the \textit{characteristics} of the $C^{1, \beta}$ open set $D$. Note that $\mb R^d$ is trivially a $C^{1, \beta}$ open set for any $\beta \in (0, 1]$. 
If in Definition \ref{d:c1beta}, we only assume $\psi$ is Lipschitz, then we say $D$ is a Lipschitz open set.

From now on, unless explicitly stated otherwise, we assume $D$ is a $C^{1,\beta}$ open set with $\beta\in ((\alpha-1)+, 1]$. 
As in Section \ref{s:intro}, we assume the boundary $\partial D$ is given by \eqref{assumption:boundary}. Let $X$ be the reflected $\alpha$-stable process in $\overline{D}$. It follows from \cite{CK03} that $X$ has a transition density $p(t, x, y)$
which admits the following estimates: For any $T>0$, on $(0, T]\times \overline{D}\times  \overline{D}$,
\begin{align}\label{e:stabledensity}
p(t,x,y) \asymp t^{-\frac{d}{\alpha}} \wedge 
\frac{t}{|x-y|^{d+\alpha}}.
\end{align}
Let $\mathcal{L}_{\alpha}^{\overline{D}}$ be the generator of $X$. 
For any function $f$ on $\overline{D}$ and $x\in \overline{D}$, 
we define
\begin{align}\label{e:pvgenerator}
L_\alpha^{D} f(x):=\mc{A}(d,-\alpha)\text{ p.v.}\int_{D}\frac{f(y)-f(x)}{|y-x|^{d+\alpha}}dy,
\end{align}
wherever the right hand side exists, where $\mc{A}(d,-\alpha)$ is the constant defined in \eqref{e:constantA} and
$$
\text{ p.v.}\int_{D}\frac{f(y)-f(x)}{|y-x|^{d+\alpha}}dy:=\lim_{\epsilon\downarrow 0}\int_{D\cap B(x, \epsilon)}\frac{f(y)-f(x)}{|y-x|^{d+\alpha}}dy.
$$
Let $C^2(\overline{D})$ denote the family of functions that are restrictions to $\overline{D}$ of $C^2$ functions on $\R^d$. 
It follows from \cite[Theorems 5.3 and 6.1]{Guan-Ma06} that $C^2(\overline{D})$ is contained in the domain of definition of
both $\mc{L}_\alpha^{\overline{D}}$ and $L_\alpha^{D}$, and these two operators coincide on $C^2(\overline{D})$.
Using these facts and \eqref{e:stabledensity}, one can easily check that $X$ satisfies the assumptions of \cite[Subsection 2.1]{CKSV20} and the following condition which plays a crucial role in the establish explicit estimates for the survival probability of the process $Y$ to be introduced below:

{\bf (U)}: There exists $r_0 \in (0, \infty]$ such that for any $\frac{1}{2}<b<a<1$ and $r \le  r_0$, there exists $c(a,b,r_0)>0$ such that
\begin{equation}\label{key upper bound}
\inf_{f \in \mc D\big(\overline{\mc A(x,ar, br)}\cap \overline{D}, \mc A(x,r,\frac{1}{2}r)\cap \overline{D} \big)} \sup_{z \in \overline{D}} \mc L_\alpha^{\overline{D}} f(z) \le \frac{c(a,b,r_0)}{r^\alpha},
\end{equation}
where,  for compact subset $K$ of $\overline{D}$ and open subset $U$ of $\overline{D}$ satisfying $K \subseteq U$,
$\mc D(K,U)$ is the class of $C^2$ functions $f$ with $0\le f \le 1$, $f = 1$ on $K$ and $f = 0$ on $U^c$.

Let $\kappa$ be the function defined in \eqref{def:killing function}. It is easy to check that $\kappa(x)dx$ belongs to
the class $\mathbf{K}_1(D)$ (defined in \cite[Definition 2.12]{CKSV20})  associated with $X$. Let $(T^{D, \kappa}_t)_{t\ge 0}$
be the semigroup defined in \eqref{e:semig} and let $Y$ be the Hunt process associated with $(T^{D, \kappa}_t)_{t\ge 0}$.
The state space of $Y$ is $D_{\kappa}:=D\setminus \cup^{m_1}_{l=1}\{x_l\}$. We use $\zeta$ to denote the lifetime of $Y$. 
By \cite[Proposition 2.14 and the argument before it]{CKSV20}, $Y$ 
has a transition density $q^{D, \kappa}(t,x,y)$ with respect to the Lebesgue measure.
Furthermore,  for fixed $x\in D_\kappa$, 
$(t,y)\mapsto q^{D, \kappa}(t,x,y)$ is continuous; 
and for fixed $y\in D_\kappa$, 
$(t,x)\mapsto q^{D, \kappa}(t,x,y)$ is continuous. The following estimates of $q^{D, \kappa}(t, x, y)$ are proved in \cite[Theorem 2.22]{CKSV20}:

\begin{theorem}\label{t:vthke}
(Varopoulos-type heat kernel estimates)
For any $T>0$,  
on $(0, T]\times D_\kappa\times D_\kappa$, we have 
\begin{align*}
q^{D, \kappa}(t,x,y) \asymp \mb{P}^x(\zeta > t)\mb{P}^y(\zeta > t)p(t,x,y).
\end{align*}
\end{theorem}

For any Borel set $A\subset D$, define
\begin{equation*}
\tau_A:= \inf \{t>0:Y_t \notin A\}.
\end{equation*}
The following result, which is a consequence of \cite[Corollary 2.16]{CKSV20},   will be useful in proving Lemma \ref{lifetime comparison} below.
\begin{lemma}\label{lemma:interior}
Let $T>0$ and $\alpha\in (0,1)$.  
For $(t, x)\in (0, T)\times D_\kappa$ satisfying 
$B(x,t^{1/\alpha}) \subseteq D_\kappa$, it holds that
$$
\inf_{z \in B(x,at^{1/\alpha})}\mb{P}^z(\tau_{B(x,t^{1/\alpha})} > t) \asymp 1.
$$ 
\end{lemma}

For $r>0$, we define
\begin{align}
\mc U(x,r):= B(x,r)\cap D_\kappa = \{z \in D_\kappa: |z-x|<r\}. \label{def:ball}
\end{align} 
Using condition {\bf (U)} and Lemma \ref{lemma:interior}, we can prove the following estimates,
which provide a pathway to proving explicit estimates of $\mb{P}^x(\zeta > t)$.
\begin{lemma}\label{lifetime comparison} 
Let $T \in (0,\infty)$ and $a\in (0,1]$. 
For $(t, x)\in (0, T)\times D_\kappa$
with $\delta_{D_\kappa}(x) < at^{1/\alpha}$, it holds that
$$
\mb{P}^x(\zeta > t) 
\asymp \mb{P}^x(Y_{\tau_{\mc U(x,at^{1/\alpha})}} \in D_\kappa)
\asymp \mb P^x(Y_{\tau_{\mc U(x,at^{1/\alpha})}} \in W),$$ 
where $W$ is a ball satisfying
\begin{itemize}
\item $W \subseteq D_\kappa$. 
\item radius of $W$ is comparable to $t^{1/\alpha}$.
\item $\dist(W, \mc U(x,at^{1/\alpha}) )\asymp t^{1/\alpha}$.
\item $\dist(W, D^c_\kappa) \gtrsim t^{1/\alpha}$.
\end{itemize} 
\end{lemma}
The proof is essentially contained in \cite[Lemmas 2.20 and 2.21]{CKSV20}. We provide a simplified proof in Appendix \ref{app:proof of lifetime}. This lemma allows us to reduce the lifetime estimate to exit distribution estimates, which can be estimated using Dynkin's formula. See Figure \ref{figure0} for an illustration of $W$.

\begin{figure}
\begin{tikzpicture}
\draw[thick] plot[smooth cycle] coordinates {(-4,3) (3,5) (4,-2) (0,-5) (-5,-3)};

\draw[thick] (2,0) circle (1cm);
\node at (2,0) {$\cdot$}; 
\draw[->, thick] (3,0) -- (4.2,0); 
\draw[->, thick] (0.5,-3.5) -- (2,-1);
\node at (3.6,0.5) {$\asymp t^{1/\alpha}$};
\node at (2,-2.65) {$\asymp t^{1/\alpha}$};
\node at (-0.2,-4.65) {$x \cdot$};
\draw[thick] plot[smooth] coordinates {
    (-1.5,-4.85) 
    (-0.75, -3.8)
    (0,-3.5)
    (0.75, -3.7)
    (1.5,-4.45)
};
\node[below right] at (-3, 0) {$D$};
\node[below right] at (-1.5,-2.8) {$\mc U(x,at^{1/\alpha})$};
\node[below right] at (1.9, 0.5) {$W$};
\end{tikzpicture}
\caption{Illustration of the choice of $W$.}
\label{figure0}
\end{figure}

In the remainder of this section, we recall some harmonic functions for $L^D_\alpha-\kappa$ in two special cases:
$D =\mb R_+^d = \{y= (\widetilde{y},y_d) \in \mathbb{R}^{d-1} \times \mathbb{R}: y_d > 0\}$ and $\kappa(x)=\lambda x_d^{-\alpha}$, and the case $D=\mb R^d$ and $\kappa(x)=\mu |x|^{-\alpha}$. 
In both cases, the harmonic functions are given 
by $\delta_{D_\kappa}^p(x)$, power of the distance to the boundary of $D_\kappa$.
We will call these harmonic functions \textit{harmonic profiles}.

The following result  is due to \cite[(5.4)]{BBC03}: 
\begin{prop} \label{harmonic: 1 codimension}
Suppose $D= \mathbb{R}_+^d$. For any $p \in (0,\alpha)$, 
\begin{equation} 
L^D_\alpha x_d^p -C_0(d,\alpha,p)x_d^{p-\alpha} = 0, \quad x\in D,
\end{equation}
where 
\begin{equation}\label{constant 1: C_0(d,p)}
C_0(d,\alpha,p):= \mathcal{A}(d,-\alpha) \frac{\omega_{d-1}}{2}\beta(\frac{\alpha+1}{2},\frac{d-1}{2}) \gamma(\alpha,p), 
\end{equation}
and $\beta(\cdot,\cdot)$ is the Beta function, $\omega_{d-1}$ is the area of the unit sphere in $\mathbb{R}^{d-1}$ and $\gamma(\alpha,p) = \int_{0}^1 \frac{(t^p-1)(1-t^{\alpha-p-1})}{(1-t)^{1+\alpha}}dt$. 
\end{prop}
The following result  is due to \cite[Theorem 1]{Ferrari12}:
\begin{prop} \label{harmonic: higher codimension}
Suppose $D=\mb R^d$. For any $p\in (0,\alpha)$, we have
\begin{equation} 
L^D_\alpha |x|^p- C_1(d,\alpha,p)|x|^{p-\alpha} = 0, \quad x\in \mb R^d\setminus \{0\},
\end{equation}
where 
\begin{equation}\label{constant 2: C_1(d,p)}
\begin{aligned}
& C_1(d,\alpha,p) = \mathcal{A}(d,-\alpha) \int_1^{\infty} (s^p-1)(1-s^{-d+\alpha- p})s(s^2-1)^{-1-\alpha} H(s,d)ds, \\
& H(s,d) = 2\pi \frac{\pi^{(d-3)/2}}{\Gamma(\frac{d-1}{2})} \int_0^{\pi} \sin^{d-2}\theta \frac{(\sqrt{s^2 - \sin^2 \theta} + \cos \theta)^{1+\alpha}}{\sqrt{s^2 - \sin^2 \theta}}d\theta. 
\end{aligned}
\end{equation}
\end{prop}

\begin{remark} \label{constant: relation}
It is shown in \cite[Section 3.1]{CKSV20} that $C_0(d,\alpha,p)$ is strictly increasing on $(\frac{\alpha-1}{2},\alpha)$ and $$
C_0(d,\alpha,\alpha -1) = 0\ and \ \lim_{p \to \alpha -} C_0(d,\alpha,p) = \infty.$$
Similarly, by direct calculation, it is easy to see that $C_1(d,\alpha,p)$ is strictly increasing on $(0,\alpha)$ and $$
\lim_{p \to 0+}C_1(d,\alpha,p) = 0\ and \ \lim_{p \to \alpha-}C_1(d,\alpha,p) = \infty.$$
\end{remark}

\section{Perturbation arguments for $C^{1,\beta}$ open set}

In this section, we present some perturbation arguments for $C^{1,\be}$ open sets. 
In case $D$ is a $C^{1,1}$ open set, by using the analysis in \cite{BBC03}, one can construct appropriate super(sub)-harmonic functions based on $\delta_D^p(x)$.
The existence of inner and outer balls for $C^{1,1}$ open sets is crucial in this construction.
However, when $D$ is only assumed to be $C^{1,\be}$ with $\be<1$, 
inner and outer ball conditions do not hold. To get around this difficulty, we construct super(sub)-harmonic functions 
based the ``vertical'' distance to the boundary. 
This construction is inspired by \cite{Guan07}.

\subsection{Local analysis of $C^{1,\beta}$ open set}
We always assume that $D$ is a $C^{1, \beta}$ open set unless explicitly specified other wise. For any $p>0$ and $r>0$, we define
$\mathcal{P}^{r,\beta}: = \{y= (\widetilde{y},y_d) \in \mathbb{R}^{d-1} \times \mathbb{R}: r^{-1} |\widetilde{y}|^{1+\beta} < y_d < r\}$.

First we recall the key idea in the $C^{1,1}$ case. The following fact makes use of the inner ball and outer ball condition and is contained in the proofs of Lemmas 5.6, 5.9, 6.3 in \cite{BBC03}:
\begin{lemma}\label{C11 case}
Assume $D$ is a $C^{1,1}$ open set, $z \in \partial D$ and $CS_z$ is the local coordinate system based at $z$ in Definition
\ref{d:c1beta}. 
There exists $r_0>0$ depending only on the characterstics of $D$ such that for $p>0$, $r\in (0, r_0)$ and $y \in \big(\mathcal{P}^{r,1} \cup (- \mathcal{P}^{r,1}) \big)\cap CS_z$,  it holds that
\begin{equation} \label{difference of distances: C11}
\big| \delta_D(y)^p - |y_d|^p \big| \le c(p) |y_d|^{p-1}|\widetilde{y}|^2.
\end{equation}
\end{lemma}

\begin{center}
\begin{figure}[H] 
    \centering
    \begin{tikzpicture}
        \draw[->] (-2, 0) -- (2, 0) node[right] {$x$};
        \draw[->] (0, -0.5) -- (0, 2) node[above] {$y$};

        \draw[thick, domain=-1.5:1.5, smooth, samples=100] 
            plot (\x, {2*abs(\x)^(1.5)});

        \fill[blue!30, opacity=0.5] (0, 0.5) circle (0.5);
        \draw[thick, blue] (0, 0.5) circle (0.5);
    \end{tikzpicture}
    \caption{Failure of the inner ball condition for $C^{1,\beta}$ open set.}
    \label{figure:inner ball}
\end{figure}
\end{center}
However, for a $C^{1,\beta}$ open set with $\beta \in (0,1)$, \eqref{difference of distances: C11} 
may not be true. Specifically, the analog inequality 
\begin{align}\label{e:cex}
\big| \delta_D(y)^p - |y_d|^p \big| \le c(p) |y_d|^{p-1}|\widetilde{y}|^{1+\beta}
\end{align}
may not be correct. 
To illustrate this, suppose $d=2$ and $\psi$ is a function with a $C^{1, \beta}$ function on $\R$ which agrees with
the function $|x_1|^{1+\beta}$ when $|x_1|<1$. Let 
\[
D = \{(y_1, y_2) : y_2 > \psi(y_1)\}.
\]
For the point $(0, y_2)$ with $y_2$ a small positive number, we have
\begin{equation}\label{ineqn:C1beta vs C11}
    \delta_D((0, y_2)) < |y_2|.
\end{equation}
Then the left-hand side of \eqref{e:cex} is strictly positive, while the right-hand side of the inequality is zero.

The main reason is that, when $\beta < 1$, any ball centered at $(0, r)$ with (small enough) radius $r$ intersects the curve $y_2 = |y_1|^{1+\beta}$, see Figure \ref{figure:inner ball} for an illustration.

Therefore, when $D$ is $C^{1,\beta}$ for some $\beta < 1$, $\delta_D(\cdot)$ is no longer a suitable function to work with, since the crucial estimate \eqref{difference of distances: C11} does not hold. An alternative is to replace 
$\delta_D(x)$ by the vertical distance function.
Given $z \in \partial D$, we have
\[
D \cap B(z, R_0) = \{y \in \mathbb{R}^d : y_d > \psi_z(\widetilde{y})\} \cap B(z, R_0),
\]
where $R_0>0$ is localization radius of $D$ and $\psi_z$ is the $C^{1, \beta}$ function based at $z$ in Definition \ref{d:c1beta}.
The vertical distance function is then defined as:
\begin{equation}\label{distance-fcn-new: codimension 1}
    h_z(y) = |y_d - \psi_z(\widetilde{y})|, \quad \forall y \in CS_z.
\end{equation}
Note that $h_z$ is 
defined in terms of  $\psi_z$ and so it depends on $z$. 
The following lemma, which is valid when $D$ is Lipschitz, see \cite{Ancona78, Guan07}, says that, locally, the vertical distance $h_z(\cdot)$ is comparable to $\delta_D(\cdot)$. We give an elementary proof here for the reader's convenience.  
 
\begin{lemma} \label{comparison: codimension 1}
Let $R_0>0$ be the localization radius of $D$. 
For any $z \in \partial D$ and $y \in B(z,R_0/2) \cap D$, 
it holds that  $h_z(y) \asymp \delta_D(y)$ with comparison constants depending only on the characteristics of $D$.
If $D = \ZR^d_+$, we have $h_z(y) = \delta_D(y)$.
\end{lemma}
\begin{proof}
Fix $z\in \partial D$ and let $\psi=\psi_z$ be the $C^{1, \beta}$ function based at $z$ in Definition \ref{d:c1beta}. 
Since $\psi$ is Lipschitz with $\|\nabla \psi\|_{\infty} \le \Lambda$, there exists 
$k>0$ such that the cone 
$\mc C_{\psi(\wt y)} : = \{x\in B(z,R_0): x_d >\Lambda |\wt x - \wt y| + \psi_z(\wt y)\}$, 
with vertex at $(\wt y, \psi(\wt y)) \in \partial D$, 
is strictly contained in $D \cap B(z,R_0)$. 
Therefore, 
$$
\delta_D(y) \ge \text{dist}(y, \mc C_{\psi(\wt y)}) \wedge \frac{R_0}{2}  = \frac{h_z(y)}{\sqrt{\Lambda^2 + 1 }} \wedge \frac{R_0}{2},
$$ 
which concludes the proof since $\delta_D(y) \le h_z(y) \lesssim R_0$.
\end{proof}

Now we present our perturbation argument for $C^{1,\beta}$ open sets.
We start with special $C^{1,\beta}$ open sets defined as follows. Let $\psi: \mb{R}^{d-1} \to \mb{R}$ function with
characteristic $\Lambda$. The set $D_{\psi} := \{y = (\wt{y}, y_d) \in \mb{R}^{d-1} \times \mb{R}: y_d > \psi(\wt{y})\}$
is called a special $C^{1,\beta}$ open set.
Without loss of generality, we assume $\psi(0)=0, \nabla \psi(0) = 0$. 

For any $x = (\wt x, x_d)\in\R^d$, 
\begin{itemize}
\item define 
$x^{(0)} := (\wt{x}, \psi(\wt{x})) \in \partial D_{\psi}$. 
\item define $\Pi_x$ by the tangent plane to $\partial D_{\psi}$ at 
$x^{(0)}$. $\Pi_x$ is spanned by 

\begin{equation}\label{def:tangent plane definition}
\begin{aligned}
& v_1 = \{1,0,\cdots, 0, \frac{\partial \psi}{\partial x_1}(\wt{x})\} \\
& \cdots \\
& v_{d-1} = \{0,0,\cdots, 1, \frac{\partial \psi}{\partial x_{d-1}}(\wt{x})\}.
\end{aligned}
\end{equation}

\item define the vertical distance 
$h(x) = |x-x^{(0)}| = |x_d - \psi(\wt x)|$.
\item define $h_x^*(y)$ to be the distance between $y \in \mb R^d$ and the tangent space $\Pi_x$: $h_x^*(y) := \text{dist}(y,\Pi_x)$.
\end{itemize}

\begin{center}
\begin{figure}[H]
\begin{tikzpicture}
\draw [->] (0,0) -- (10,0) node [right] {$\wt x$};
\node at (-.5,0) {$O$};
\draw[scale=1, domain=0:10, smooth, variable=\x, blue] plot ({\x}, {\x*sqrt(\x)/10});
\fill (0,0) circle (.3mm);
\fill (4,.8) circle (.3mm);
\fill (5.11,1.07) circle (.3mm);
\fill (5.31,1.23) circle (.3mm);
\fill (4,5) circle (.3mm);
\node at (4,.5) {$x^{(0)}$};
\node at (5.4,.9) {$x^{(1)}$};
\node at (5.6,1.55) {$x^{(2)}$};
\node at (2,5) {$h(x) = |x-x^{(0)}|$};
\node at (2,4) {$h_x^*(x) = |x-x_1|$};
\node at (2,3) {$\delta_{D_\psi}(x) = |x-x_2|$};
\node at (4,5.2) {$x$};

\draw[black,dotted] (4,.8 ) -- (4,5);
\draw[black,dotted] (4,5) -- (5.1,1.1);
\draw[black,dotted] (4,5) -- (5.31,1.23);
\draw [->] [black] (4,.8) -- (9,2);
\node at (9.3,3.3) {$\psi(\wt x)$};
\node at (9.3,1.85) {$\Pi_x$};
\node at (8.3,1.5) {$\psi^*(\wt x)$};
\end{tikzpicture}
\caption{Illustration of $h(x),h_x^*(x), \Pi_x$.}
\label{figure1}
\end{figure}
\end{center}
We illustrate the above definition in Figure \ref{figure1}. Using Lemma \ref{comparison: codimension 1}, for any $x\in D_{\psi}$ with $x$ close to the boundary, 
\begin{equation}\label{equivalence of three distances}
    h_x^*(x) \asymp h(x) \asymp \delta_{D_{\psi}}(x).
\end{equation}
With a more detailed analysis, we can get the following finer result. Note that the assumption $|x|<r_1$ can be relaxed to $\delta_{D_{\psi}}(x) < r_1$.  We only state this special form since this is the form that will be used later.
\begin{lemma} \label{key lemma: distance comparison}
Assume $D_{\psi}$ is a special $C^{1,\beta}$ open set defined as above. There exist positive constants $r_1$ and $A_0$ depending only on the characteristic of $D_\psi$ such that for all $x \in D_{\psi}$ with $|x|<r_1$, it holds that
\begin{equation}
|h_x^*(x) - \delta_{D_{\psi}}(x)| \le A_0 \delta_{D_{\psi}}(x)^{1+\be}.
\end{equation}
\end{lemma}
\begin{proof}
Fix an arbitrary $x \in D_{\psi}$ and set $x^{(0)} = (\widetilde{x}, \psi(\widetilde{x}))$. 
We introduce a new coordinate system, 
denoted by $CS_{x^{(0)}}^*$, with origin at $x_0$ such that $\Pi_x = \{ y^* \in CS_{x^{(0)}}^* : y_d^* = 0 \}$.
In $CS_{x^{(0)}}^*$, 
the $C^{1,\beta}$ function $\psi$ is represented by another $C^{1,\beta}$ function $\widehat{\psi}$ 
such that $\widehat{\psi}(\widetilde{0}^*) = 0$ and $\nabla \widehat{\psi}(\widetilde{0}^*) = 0$. See Figure \ref{figure2} for an illustration of this new coordinate system.
\begin{center}
\begin{figure}[H]
\begin{tikzpicture}
\draw [->] (0,0) -- (8,0) node [right] {$\Pi_x$};
\node at (-.5,0) {$x^{(0)}$};
\node at (2,4.2) {$x = (\wt{x}^*, x_d^*)$};
\node at (2,-0.2) {$x^{(1)}$};
\node at (1.8,0.7) {$x^{(3)}$};
\node at (4.9,1.25) {$x^{(2)}= (\wt y^*, \widehat{\psi}(\wt y^*))$};
\node at (4.8,2.6) {$\widehat{\psi}$};
\draw[scale=1, domain=0:8, smooth, variable=\x, blue] plot ({\x}, {\x*sqrt(\x)/5});
\fill (0,0) circle (.3mm);
\fill (2,4) circle (.3mm);
\fill (2,0.565) circle (.3mm);
\fill (2,0) circle (.3mm);
\fill (3.8,1.48) circle (.3mm);
\draw[black,dotted] (2,4) -- (2,0);
\draw[black,dotted] (2,4) -- (3.8,1.48);
\end{tikzpicture}
\caption{Illustration of the new coordinate system $CS_{x^{(0)}}^*$.}
\label{figure2}
\end{figure}
\end{center}
Now we compare $h_x^*(x)$ and $\delta_{D_{\psi}}(x)$. First, we express $h_x^*(x)$ and $\delta_{D_{\psi}}(x)$ in 
the $CS_{x_0}^*$. In $CS_{x^{(0)}}^*$, we write
\begin{equation}
x = (\wt{x}^*, x_d^*),\quad x^{(1)} = (\wt{x}^*, 0),\quad x^{(2)} = (\wt y^*,y_d^*) = (\wt y^*, \widehat{\psi}(\wt y^*)).
\end{equation}
Then by definition,
\begin{equation}
    h_x^*(x) = |x-x^{(1)}| = |x_d^*|,\quad \delta_{D_{\psi}}(x) = |x-x^{(2)}| = \sqrt{|\wt y^* - \wt x^*|^2 + (x_d^* - \widehat{\psi}(\wt y^*))^2}.
\end{equation}
Note that the distances are equivalent (see \eqref{equivalence of three distances}) 
when $|x|$ is small. 
Thus we can choose $r_1$ small enough so that, whenever \( |x| < r_1 \), the following holds:
\begin{equation}\label{e:elemfacts}
    \sqrt{|\wt x^*|^2 + |x_d^*|^2} \asymp \delta_{D_{\psi}}(x) \asymp |x_d^*|, \quad |\wt x^*| \lesssim |x_d^*|,
\end{equation}
where the implicit constants depend only on the characteristic of $\psi$. It remains to show that
\begin{equation}\label{geometry lemma:goal}
    \big| |x_d^*| - \sqrt{|\wt y^* - \wt x^*|^2 + (x_d^* - \widehat{\psi}(\wt y^*))^2} \big| \lesssim |x_d^*|^{1+\beta},
\end{equation}
which is equivalent to that there exist constants $c_1, c_1'>0$ depending on the characteristic of $\psi$ such that
\begin{align}\label{geometry lemma:goal equiv}
 |x_d^*| - c_1 |x_d^*|^{1+\beta} \le \sqrt{|\wt y^* - \wt x^*|^2 + (x_d^* - \widehat{\psi}(\wt y^*))^2} \le  |x_d^*| + c_1' |x_d^*|^{1+\beta}.
\end{align}
The upper bound is simple. In fact, since $|x - x^{(2)}| = \sqrt{|\wt y^* - \wt x^*|^2 + (x_d^* - \widehat{\psi}(\wt y^*))^2}$, 
choosing $x^{(3)} = (\wt x^*,\widehat{\psi}(\wt x^*))$ and using the triangle inequality, we get
\begin{align*}
    |x-x^{(2)}| \le |x - x^{(3)}| & \le |x - x^{(1)}| + |x_1 - x^{(3)}| \\
    & = |x_d^*| + |\widehat{\psi}(\wt x^*)| \le |x_d^*| + 
    \Lambda |\wt x^*|^{1+ \beta}\lesssim|x^*_d|^{1+\beta}.
\end{align*}
In the last inequality, we used the last relation in \eqref{e:elemfacts}. In the penultimate inequality, we used the facts $\widehat{\psi}(\wt 0^*) = 0, \nabla \widehat{\psi}(\wt 0^*) = 0$ and the mean value theorem
\begin{align}\label{geometry lemma:intermediate mean value}
    |\widehat{\psi}(\wt x^*)| 
    = |\nabla \widehat{\psi}(\wt \xi^*)| |\wt x^*| = |\nabla \widehat{\psi}(\wt \xi^*) - \nabla \widehat{\psi}(\wt 0^*)| |\wt x^*| \le 
    \Lambda |\wt x^*|^{1+\beta},
\end{align}
where $\wt \xi^*$ is a point on the line segment connecting $\wt 0^*, \wt x^*$. 

It remains to prove the lower bound in 
\eqref{geometry lemma:goal equiv}, which is equivalent to 
\begin{align*}
    |\wt y^* - \wt x^*|^2 + (x_d^* - \widehat{\psi}(\wt y^*))^2 & \ge |x_d^*|^2 + c_1 |x_d^*|^{2+2\beta} - 2c_1|x_d^*|^{2+\beta} \\
    & \ge |x_d^*|^2 - c_1|x_d^*|^{2+\beta} + c_1|x_d^*|^{2+\beta} (|x_d^*|^\be - 1).
\end{align*}
By choosing $r_1>0$ smaller if necessary, we can assume $|x_d^*|^\be < 1$.  Therefore, to show the above inequality, it suffices to show
\begin{equation}\label{geometry lemma:intermediate 1}
     |\wt y^* - \wt x^*|^2  + \widehat{\psi}(\wt y^*)^2 + c_1|x_d^*|^{2+\beta} \ge 2 x_d^* \widehat{\psi}(\wt y^*). 
\end{equation} 
We now prove \eqref{geometry lemma:intermediate 1} in two separate cases.

\textbf{Case I}: $|\wt y^* - \wt x^*| \ge |x_d^*|$. In this case, 
\begin{align*}
     2 x_d^* \widehat{\psi}(\wt y^*) \le |x_d^*|^2 +  
     \widehat{\psi}(\wt y^*)^2.
\end{align*}

\textbf{Case II}: $|\wt y^* - \wt x^*| \le |x_d^*|$. In this case, we have $|\wt y^*|  \le  |\wt x^*| + |x_d^*| \lesssim |x_d^*|$. Then similar to \eqref{geometry lemma:intermediate mean value}, we have 
\begin{equation}\label{geometry lemma:intermediate 2}
    |\widehat{\psi}(\wt y^*)| \le \Lambda |\wt y^*|^{1+\beta} \le c_2 |x_d^*|^{1+\beta}.
\end{equation}
Combining this with the last relation in \eqref{e:elemfacts}, we get
\begin{align*}
     2 x_d^* \widehat{\psi}(\wt y^*) \le c_3 |x_d^*|^{2+\beta}. 
\end{align*}
Therefore, by choosing $c_1 > c_3$, 
we arrive at \eqref{geometry lemma:intermediate 1}. Hence \eqref{geometry lemma:goal} is valid.
\end{proof}

In the remainder of this paper, $r_1$ always stands for the constant in Lemma \ref{key lemma: distance comparison}. 

\subsection{Perturbation argument for $C^{1,\beta}$ special open set.}
Now we prove the main result of this subsection, which is a perturbation argument near the origin, using the vertical distance as a test function.
\begin{prop} \label{key prop: perturbation}
Suppose $D_{\psi}$ is a special $C^{1,\beta}$ open set and $h(y) := |y_d - \psi(\wt y)|, y \in \mb R^d$, is the vertical distance function. Then for any $p\in [\alpha-1,\alpha)\cap(0,\alpha)$, 
there exists $A_1>0$ depending on $p$ and the characteristic of $D_\psi$ 
such that for all $x\in D_{\psi}$ with $|x|<r_1$, 
\begin{align}\label{key-esti}
&\big|  \mc{A}(d,-\alpha){\ \rm p.v.} \int_{D_{\psi}} \frac{h(y)^p - h(x)^p}{|y-x|^{d+\alpha}}dy - C_0(d,\alpha,p)h(x)^p \delta_{D_{\psi}}(x)^{-\alpha} \big | \\
&\le  A_1(1+ \delta_{D_{\psi}}(x)^{p-\alpha+\ga} + |\log(\delta_{D_{\psi}}(x))|),\nonumber
\end{align}
where $C_0(d,\alpha,p)$ is defined in \eqref{constant 1: C_0(d,p)} and $\ga = \min\{\beta, \beta \frac{1+\alpha}{1+\beta}\}$.
\end{prop}
\begin{proof}
We start with an overview of the proof.  We first use Proposition \ref{harmonic: 1 codimension} to rewrite the left hand side  as the sum of two terms, one of them involves the killing function. For the term involving the killing function, we apply Lemma \ref{key lemma: distance comparison}. For the other term, we refine certain known techniques (e.g., \cite{BBC03, Guan07, CKSV20} to achieve the desired result.

We fix $x \in D_{\psi}$ with $|x|<r_1$.

\textbf{Step I}:
Recall the definition \( h_x^*(y) = \text{dist}(y, \Pi_x) \) in \eqref{def:tangent plane definition}, with \( \Pi_x \) denoting the tangent space of \( \psi \) at \( x^{(0)} = (\wt x, \psi(\wt x)) \). Then 
 $$
 h(x) = \sqrt{1+|\nabla \psi(\wt{x})|^2} h_x^*(x).
 $$
Define $\psi^*(\wt y): = \nabla \psi(\wt x) \cdot (\wt y - \wt x) + \psi(\wt x)$. $y_d=\psi^*(\wt y)$ is the equation of the tangent plane $\Pi_x$.
By Proposition \ref{harmonic: 1 codimension}, 
\begin{align*}
\mc L_{\alpha}^{D_{\psi^*}} (h_x^{*p})(x) := 
\mc{A}(d,-\alpha) {\ \rm p.v.}\int_{D_{\psi^*}} \frac{h_x^*(y)^p - h_x^*(x)^p}{|y-x|^{d+\alpha}}dy - C_0(d,\alpha,p)h_x^*(x)^{p-\alpha} = 0.
\end{align*}
Thus we have 
\begin{align*}
& \mc{A}(d,-\alpha) {\ \rm p.v.}\int_{D_{\psi}} \frac{h(y)^p - h(x)^p}{|y-x|^{d+\alpha}}dy - C_0(d,\alpha,p)h(x)^p \delta_{D_{\psi}}(x)^{-\alpha} \\
& = \mc{A}(d,-\alpha) {\ \rm p.v.}\int_{D_{\psi}} \frac{h(y)^p - h(x)^p}{|y-x|^{d+\alpha}}dy - C_0(d,\alpha,p)h(x)^p \delta_{D_{\psi}}(x)^{-\alpha} \\
& - \left(\sqrt{1+|\nabla \psi(\wt{x})|^2}\right)^p \mc L_{\alpha}^{D_{\psi^*}} (h_x^{*p})(x) \\
& = \mc{A}(d,-\alpha) \left({\ \rm p.v.}\int_{D_{\psi}} \frac{h(y)^p - h(x)^p}{|y-x|^{d+\alpha}}dy - {\ \rm p.v.} \int_{D_{\psi^*}} \frac{(h_x^*(y)\sqrt{1+|\nabla \psi(\wt x)|^2})^p - h(x)^p}{|y-x|^{d+\alpha}}dy \right) \\
& + C_0(d,\alpha,p)h(x)^p(h_x^*(x)^{-\alpha} - \delta_{D_{\psi}}(x)
^{-\alpha})\\
&=:\mc{A}(d,-\alpha) I+C_0(d,\alpha,p) II.
\end{align*}
To estimate $|II|$, we note that $h(x)\asymp h_x^\ast(x)\asymp \de_{D_{\psi}}(x)$ with comparison constants depending on the characteristics of $\psi$. 
 It follows from  Lemma \ref{key lemma: distance comparison} that
\begin{equation}
\label{reduction-1}
    |h_x^*(x) - \delta_{D_{\psi}}(x)|\lesssim  \delta_{D_{\psi}}(x)^{\beta+1}.
\end{equation}
Applying the mean-value theorem to the function $x^{-\al}$, we get
\begin{equation}\label{part-II}
    |II|= h(x)^p \left| h_x^*(x)^{-\alpha} - \delta_{D_{\psi}}(x)^{-\alpha} \right| \lesssim \delta_{D_{\psi}}(x)^p \delta_{D_{\psi}}(x)^{-\alpha - 1} \delta_{D_{\psi}}(x)^{1+\be} = \delta_{D_{\psi}}(x)^{p-\alpha+\beta}.
\end{equation}

\noindent \textbf{Step II}: In this step, we start to estimate $|I|$. Note that
\begin{equation}
\begin{aligned}
    I &: = \text{p.v.}\int_{D_{\psi}} \frac{h(y)^p - h(x)^p}{|y-x|^{d+\alpha}}dy - \ppv \int_{D_{\psi^*}} \frac{(h_x^*(y)\sqrt{1+|\nabla \psi(\wt x)|^2})^p - h(x)^p}{|y-x|^{d+\alpha}}dy \\
    & = \ppv \int_{D_{\psi} \cap D_{\psi^*}} \frac{h(y)^p - (h_x^*(y)\sqrt{1+|\nabla \psi(\wt x)|^2})^p}{|y-x|^{d+\alpha}}dy \\
    & + \text{p.v.}\int_{D_{\psi}\backslash D_{\psi^*}} \frac{h(y)^p - h(x)^p}{|y-x|^{d+\alpha}}dy - \ppv \int_{D_{\psi^*}\backslash D_{\psi}} \frac{(h_x^*(y)\sqrt{1+|\nabla \psi(\wt x)|^2})^p - h(x)^p}{|y-x|^{d+\alpha}}dy.
\end{aligned}
\end{equation}
To estimate $|I|$, we define
\begin{align*}
& U_1:= \{y\in \mb R^d: |\wt y - \wt x| < r_1, y_d < 2r_1\}, \\
& U_2:= \{y \in \mb R^d: |\wt y - \wt x| < r_1, \psi(\wt y)< y_d < \psi^*(\wt y)\quad or\quad  \psi^*(\wt y)< y_d < \psi(\wt y)\}.
\end{align*}
Since $|x|< r_1$, we have $B(x,r_1) \cap D_{\psi} \subseteq U_1 \cap D_{\psi}$. Thus
\begin{align*}
    & D_{\psi} \cap D_{\psi^*} \subseteq (D_{\psi} \cap D_{\psi^*} \cap U_1) \bigcup B(x,r_1)^c, \\
    & (D_{\psi^*}\backslash D_{\psi}) \bigcup (D_{\psi}\backslash D_{\psi^*}) \subseteq \overline{U_2} \bigcup B(x,r_1)^c.
\end{align*}
\begin{center}
\begin{figure}
\begin{tikzpicture}
\draw [->] (-3,0) -- (6,0) node [right] {$\wt x$};
\draw[scale=1, domain=-3:6, smooth, variable=\x, blue] plot ({\x}, {abs(\x)^(1.8)/5});
    \pgfmathsetmacro{\fx}{abs(2)^(1.8)/5}    
    \pgfmathsetmacro{\fpx}{1.8*abs(2)^(0.8)/5} 
\draw[black, domain=-3:6, smooth, variable=\x] plot ({\x}, {\fpx * (\x - 2) + \fx});
    \begin{scope}
        \clip (-1,-2) rectangle (5,5);
        \fill[blue!20, opacity=0.5] 
            plot[domain=-1:5, smooth, variable=\x] ({\x}, {abs(\x)^(1.8)/5})
            -- plot[domain=5:-1, smooth, variable=\x] ({\x}, {\fpx * (\x - 2) + \fx});
    \end{scope}

        \begin{scope}
        \clip (-1,0) rectangle (5,6);
        \fill[red!20, opacity=0.5] 
            plot[domain=-1:5, smooth, variable=\x] ({\x}, {abs(\x)^(1.8)/5})
        -- plot[domain=5:-1, smooth, variable=\x] ({\x}, {5.5}) -- cycle;
    \end{scope}
\fill (0,0) circle (.3mm);
\fill (2,2) circle (.3mm);
\fill (2,0.7) circle (.3mm);
\fill[black!30, opacity=0] (2, 2) circle (3);
        \draw[dotted, blue] (2, 2) circle (3);
\draw[thick] (-1,-3) -- (-1,5.5);
\draw[thick] (5,-3) -- (5,5.5);
\draw[thick] (-1,5.5) -- (5,5.5);
\node at (0,0.35) {$O$};
\node at (1.7,2) {$x$};
\node at (2,4) {$D_{\psi} \cap D_{\psi^*} \cap U_1$};
\node at (-1.5,-0.35) {$U_2$};
\node at (-1.5,0.8) {$\psi$};
\node at (-1.5,-2) {$\psi^*$};
\node at (5.9,2.8) {$\Pi_x$};
\end{tikzpicture}
\caption{Illustration of $D_{\psi} \cap D_{\psi^*} \cap U_1$(red) and $U_2$(blue).}
\label{figure:partition}
\end{figure}
\end{center}
Hence,
\begin{align*}
|I| & \le \int_{D_{\psi} \cap D_{\psi^*} \cap U_1} \frac{|h(y)^p - (h_x^*(y)\sqrt{1+|\nabla \psi(\wt x)|^2})^p|}{|y-x|^{d+\alpha}}dy \\
& + \int_{U_2} \frac{|(h_x^*(y)\sqrt{1+|\nabla \psi(\wt x)|^2})^p - h(x)^p|}{|y-x|^{d+\alpha}}dy+ \int_{U_2} \frac{|h(y)^p - h(x)^p|}{|y-x|^{d+\alpha}}dy \\
& + \int_{B(x,r_1)^c}\frac{|(h_x^*(y)\sqrt{1+|\nabla \psi(\wt x)|^2})^p - h(x)^p|}{|y-x|^{d+\alpha}}dy + \int_{B(x,r_1)^c}\frac{|h(y)^p - h(x)^p|}{|y-x|^{d+\alpha}}dy \\
& =: I_1 + I_2 + I_3 + I_4 + I_5.
\end{align*}
$I_4$ and $I_5$ can be bounded by constants. 
We will show  $I_5 \lesssim r_1^{p-\alpha}$. $I_4$ can dealt with similarly.
Note that $|h(y)- h(x)| = \big|y_d - \psi(\wt y) - (x_d - \psi(\wt x))\big| \le (\Lambda + 1) |y - x|$, thus we have 
\begin{align*}
 |h(y)^p- h(x)^p|& \le h(y)^p + h(x)^p \le \left(h(x) + (1+\Lambda) |y-x|\right)^p + h(x)^p \\
 & \le \max\{2^{p-1}, 1\}\left(h(x)^p + (1+\Lambda)^p |y-x|^p\right) + h(x)^p \\
 & \lesssim h(x)^p + |y-x|^p.
\end{align*}
This implies that
\begin{align}\label{non-singular argument}
\int_{B(x,r_1)^c}\frac{|h(y)^p - h(x)^p|}{|y-x|^{d+\alpha}}dy \lesssim  \int_{B(x,r_1)^c} \frac{h(x)^p dy}{|y-x|^{d+\alpha}} + \int_{B(x,r_1)^c}\frac{|y-x|^p}{|y-x|^{d+\alpha}}dy\lesssim r_1^{p-\alpha}.
\end{align}

Now we estimate $I_1$. By
the  mean-value theorem, for $a,b>0$, we can easily see that 
$|a^p-b^p|  \le p(a^{p-1}+b^{p-1})|a-b|$. Thus,
\begin{align}\label{e:I1}
I_1 & = \int_{D_{\psi} \cap D_{\psi^*} \cap U_1} \frac{|h(y)^p - (h_x^*(y)\sqrt{1+|\nabla \psi(\wt x)|^2})^p|}{|y-x|^{d+\alpha}}dy \\
&\lesssim
\int_{D_{\psi} \cap D_{\psi^*} \cap U_1} \frac{(h_x^*(y)\sqrt{1+|\nabla \psi(\wt x)|^2})^{p-1}|h(y)- h_x^*(y)\sqrt{1+|\nabla \psi(\wt x)|^2}|}{|y-x|^{d+\alpha}}dy\nonumber\\
& + \int_{D_{\psi} \cap D_{\psi^*} \cap U_1} \frac{h(y)^{p-1}|h(y)- h_x^*(y)\sqrt{1+|\nabla \psi(\wt x)|^2}|}{|y-x|^{d+\alpha}}dy \nonumber\\
& = \int_{D_{\psi} \cap D_{\psi^*} \cap U_1} \frac{(y_d - \psi^*(\wt y))^{p-1}|h(y)- h_x^*(y)\sqrt{1+|\nabla \psi(\wt x)|^2}|}{|y-x|^{d+\alpha}}dy \nonumber\\
& + \int_{D_{\psi} \cap D_{\psi^*} \cap U_1} \frac{(y_d -\psi(\wt y))^{p-1}|h(y)- h_x^*(y)\sqrt{1+|\nabla \psi(\wt x)|^2}|}{|y-x|^{d+\alpha}}dy \nonumber\\
& \lesssim \int_{D_{\psi} \cap D_{\psi^*} \cap U_1} \frac{\big(|y_d - \psi^*(\wt y)|^{p-1} + |y_d - \psi(\wt y)|^{p-1} \big)|\wt y - \wt x|^{1+\beta}}{|y-x|^{d+\alpha}}dy,\nonumber
\end{align}
where in the last equality, we used the facts that $$h(y) = y_d -\psi(\wt y), \ \sqrt{1+|\nabla \psi(\wt x)|^2} h_x^*(y)= y_d - \psi^*(\wt y),$$ and 
in the last inequality of \eqref{e:I1}, we used 
the mean value theorem:
\begin{align*}
& |h(y)- h_x^*(y)\sqrt{1+|\nabla \psi(\wt x)|^2}| = \big|(y_d -\psi(\wt y)) - (y_d - \psi^*(\wt y))\big| \\
& = |\psi(\wt y) -  \psi^*(\wt y)| \lesssim |\wt y - \wt x|^{1+\be}.
\end{align*}
Since $\psi^*, \psi$ are Lipschitz with Lipschitz constant $\Lambda$, for any $y \in D_{\psi} \cap D_{\psi^*} \cap U_1$, using the facts $|\wt y - \wt x|<r_1$ and $|\wt x|<r_1$, we get
\begin{align}\label{e:pre4chv}
   \max\{ |y_d - \psi(\wt y)|, |y_d - \psi^*(\wt y)|\} \le 2r_1 + 2\Lambda r_1. 
\end{align}
Define
\begin{align}\label{eqn:Psi change of variable}
\Psi: (\wt y, y_d)\mapsto (\wt y, y_d - \psi(\wt y)),\quad \Psi^*: (\wt y, y_d)\mapsto (\wt y, y_d - \psi^*(\wt y)).
\end{align}
Using \eqref{e:pre4chv} we get
\begin{equation} \label{e:s1}
    \begin{aligned}
& \Psi^*(D_{\psi} \cap D_{\psi^*} \cap U_1) \bigcup \Psi(D_{\psi} \cap D_{\psi^*} \cap U_1) \\
& \subseteq \{y: |\wt y -\wt x|<r_1, 0< y_d < 2r_1 + 2\Lambda r_1\}=: \wt{U_1}.
\end{aligned}
\end{equation}
Using the Lipschitz property of $\Psi, \Psi^*$, the changes of variables $z = \Psi^*(y),\ z = \Psi(y)$ separately, and \eqref{e:s1}, we get
\begin{align*}
& \int_{D_{\psi} \cap D_{\psi^*} \cap U_1} \frac{\big(|y_d - \psi^*(\wt y)|^{p-1} + |y_d - \psi(\wt y)|^{p-1} \big)|\wt y - \wt x|^{1+\beta}}{|y-x|^{d+\alpha}}dy \\
& \le c_1\int_{D_{\psi} \cap D_{\psi^*} \cap U_1} \frac{(y_d - \psi^*(\wt y))^{p-1}|\wt y - \wt x|^{1+\beta}}{|\Psi^*(y)-\Psi^*(x)|^{d+\alpha}}dy  \\
& + c_1 \int_{D_{\psi} \cap D_{\psi^*} \cap U_1} \frac{(y_d - \psi(\wt y))^{p-1}|\wt y - \wt x|^{1+\beta}}{|\Psi(y)-\Psi(x)|^{d+\alpha}}dy  \\
& \le c_2 \int_{\wt U_1} \frac{|z_d|^{p-1} |\wt z - \wt x|^{1+\beta}}{(\sqrt{|\wt z - \wt x|^2 + |z_d - h(x)|^2})^{d+\alpha}} dz \\
& \le c_3 \left(1 + h(x)^{p-\alpha + \beta} + |\log h(x)| \right).
\end{align*}
The last inequality follows from a routine argument by splitting into two cases $|\wt z - \wt x| \ge |z_d - h(x)|$ and $|\wt z - \wt x| \le |z_d - h(x)|$. We omit the details and refe
Note that $\beta > (\alpha - 1)_+$ is needed here. 

\noindent \textbf{Step III}: In this step we estimate $I_2$ and $I_3$. Since the arguments are same, we only give the argument for $I_2$. 
First note that, if $y \in U_2$, then
\begin{equation}\label{U2 fact 1}
h(y) \lesssim  |\wt y-\wt x|^{1+\be},
\end{equation}
with implicit constant depending on $r_1$. In fact, since  $y \in U_2$ means $\psi(\wt y) < y_d < \psi^*(\wt y)$ or $\psi^*(\wt y) < y_d <\psi(\wt y)$, we have 
\begin{align*}
h(y) = |y_d -\psi(\wt y)| \le |\psi(\wt y) - \psi^*(\wt y)| = |\psi(\wt y) -\psi(\wt x) - \nabla \psi(\wt x)\cdot (\wt y - \wt x)| \le c |\wt y - \wt x|^{1+\be}.
\end{align*}
For the last inequality, we used the fact that $\psi$ is a $C^{1,\be}$ function.  

We claim that 
\begin{equation}\label{U2 fact 2}
|y-x| \gtrsim |\wt y - \wt x| + h(x), \quad y \in U_2,
\end{equation}
with implicit constant depending on $r_1$. In fact, 
using \eqref{U2 fact 1} and the fact $|y-x| \ge |\wt y - \wt x|$, we get
\begin{align*}
h(x) & = |x_d -\psi(\wt x)| \le |x_d - y_d| + |y_d -\psi(\wt x)| \\
& \le |x-y| + h(y) + |\psi(\wt y) -\psi(\wt x)| \\
& \lesssim |x - y| + |\wt x - \wt y| \lesssim |x-y|,
\end{align*}
with implicit constant depending on $r_1$. Thus we proved that $|y-x| \gtrsim |\wt y - \wt x| + h(x)$. 

Let $m_r(\cdot)$ be the surface measure on the cylinder $|\wt y-\wt x|=r$. Then we have
\begin{align*}
&\frac{1}{(\sqrt{1 + |\nabla \psi(\wt x)|^2})^p}  I_2  = \int_0^{r_1} \int_{y \in U_2, |\wt y - \wt x| = r} \frac{|h_x^*(y)^p - h_x^*(x)^p|}{|y-x|^{d+\alpha}}
m_r(dy)dr \\
& = \left(\int_0^{h(x)^{1/(1+\beta)}} + \int_{h(x)^{1/(1+\beta)}}^{r_1}\right)\int_{y \in U_2, |\wt y - \wt x| = r} \frac{|h_x^*(y)^p - h_x^*(x)^p|}{|y-x|^{d+\alpha}}m_r(dy)dr \\
& =: I_{2,1} + I_{2,2}.
\end{align*}
Here, without loss of generality, we assumed $h(x)^{1/(1+\beta)} < r_1$, otherwise $I_{2,2}$ would not appear and it would not affect our result. 

We first deal with $I_{2,1}$. In this case, since $|\wt y - \wt x| =r \le h(x)^{\frac{1}{1+\be}}$, by \eqref{U2 fact 1}, we have $h_x^*(y) \lesssim |\wt y - \wt x|^{1+\beta} \le h(x)\asymp h_x^*(x)$. Thus using \eqref{U2 fact 2},
\begin{align*}
I_{2,1} &= \int_{0}^{h(x)^{1/(1+\be)}} \int_{y \in U_2, |\wt y - \wt x| = r} \frac{|h_x^*(y)^p - h_x^*(x)^p|}{|y-x|^{d+\alpha}}m_r(dy) dr \\
& \le c_4 \int_0^{h(x)^{1/(1+\be)}} \frac{h(x)^p}{(r + h(x))^{d+\alpha}}  \int_{y \in U_2, |\wt y - \wt x| = r}   m_r(dy) dr  \\
& \le c_5 \int_0^{h(x)^{1/ (1+\be)}}  \frac{r^{d+\be - 1} h(x)^p}{(r + h(x))^{d+\alpha}} dr  = c_5 h(x)^{p-\alpha + \beta} \int_0^{h(x)^{-\be/(1+\be)}} \frac{r^{d+\beta - 1}}{(r+1)^{d+\alpha}} dr \\
& \le h(x)^{p-\alpha + \be}\big(c_{6} + c_{6} \int_1^{h(x)^{-\be/(1+\be)}}r^{\be - \alpha -1}dr\big) \\
& \lesssim
\begin{cases}
1+ h(x)^{p-\alpha + \be},\ \be - \al < 0, \\
1+ h(x)^{p-\alpha + \be} + |\log(h(x))|,\ \be - \al= 0, \\
1 + h(x)^{p-\alpha + \beta \frac{1+\alpha}{1+\be}},\ \be - \al>0.
\end{cases}
\end{align*}
In the second inequality above, we used
\begin{align*}
\int_{U_2, |\wt y - \wt x|= r} m_r(dy) = \int_{|\wt y - \wt x| = r} |\psi(\wt y) - \psi^*(\wt y)| d\wt y \lesssim r^{1+\be} r^{d-2} = r^{d+\be - 1},
\end{align*}
which is due to 
$U_2 = \{y \in \mb R^d: |\wt y - \wt x| < r_1, \psi(\wt y)< y_d < \psi^*(\wt y)\quad or\quad  \psi^*(\wt y)< y_d <\psi(\wt y)\}$.

Now we estimate $I_{2,2}$. In this case, since $r \ge h(x)^{1/(1+\be)}$, we have $h(x) \le r^{1+\beta}$ and $h(y)\lesssim |\wt y - \wt x|^{1+\beta} = r^{1+\beta}$. Therefore,
\begin{align*}
I_{2,2} &= \int_{h(x)^{1/(1+\be)}}^{r_1} \int_{U_2, |\wt y - \wt x| = r} \frac{|h_x^*(y)^p - h_x^*(x)^p|}{|y-x|^{d+\alpha}}dy dr \\
& \le c_7 \int_{h(x)^{1/(1+\be)}}^{r_1} \int_{U_2, |\wt y - \wt x| = r} \frac{r^{p(1+\be)}}{(r+h(x))^{d+\alpha}}dy dr \\
& \le c_8 \int_{h(x)^{1/(1+\be)}}^{r_1}  \frac{r^{p(1+\be)} r^{d+\be -1}}{(r+h(x))^{d+\alpha}}dr = c_{8} h(x)^{p-\alpha + \be + p\beta}\int_{h(x)^{-\be/(1+\be)}}^{r_1/h(x)}  \frac{r^{p(1+\be)} r^{d+\be -1}}{(r+1)^{d+\alpha}}dr \\
& \le c_{9} h(x)^{p-\alpha + \be + p\beta}\int_{h(x)^{-\be/(1+\be)}}^{r_1/h(x)} r^{p- \alpha + \be + p \beta - 1}dr \\
& \lesssim 
\begin{cases}
h(x)^{p-\alpha + \be + p\be},\ p-\alpha + \be + p\be < 0, \\
|\log(h(x))|,\ p-\alpha + \be + p\be= 0, \\
1,\ p-\alpha + \be + p\be>0.
\end{cases}
\end{align*}

Similarly we have
$$
I_3\lesssim 
\begin{cases}
h(x)^{p-\alpha + \be + p\be},\ p-\alpha + \be + p\be < 0, \\
|\log(h(x))|,\ p-\alpha + \be + p\be= 0, \\
1,\ p-\alpha + \be + p\be>0.
\end{cases}
$$

In conclusion, we have shown that
\begin{equation}
|I| \lesssim 1 + h(x)^{p-\alpha + \gamma}+ | \log(h(x))|,
\end{equation}
where $\ga = \min\{\beta, \beta \frac{1+\alpha}{1+\beta}\}$. The proof is now complete.
\end{proof}

\subsection{Perturbation argument for the general case}

Recall that
$$
D_\kappa=D\setminus \cup _{l=1}^{m_1}\{x_l\},
$$
where $D$ is a $C^{1, \beta}$ open set with characteristics $(R_0, \Lambda)$,   $x_l, l=1, \dots, m_1,$ are distinct points in $D$ and $\partial D=\cup _{j=1}^{m_0} \Si_j$, with $\Si_j, j=1, \dots, m_0,$ being the connected components of $\partial D$. 
Define
\begin{align}\label{e:R1}
R_1= \min_{i\neq j}\{{\rm dist}(\Sigma_i, \Sigma_j)
\}\wedge \min_{k\neq l}\{|x_k-x_l|\}\wedge \min_l\{\delta_{D'}(x_l)\}\wedge R_0>0.
\end{align}

If $w \in \partial D_\kappa$, then
$w \in \Si_j$ for some  $j=1, \dots, m_0$ or $w=x_l$ for some $l=1, \dots, m_1$. We can define the truncated vertical distance function $h_w$ by 
\begin{equation}\label{eqn:truncated vertical distance}
h_w(y): = 
\begin{cases}
|y_d^{CS_{w}} - \psi_w(\wt y^{CS_{w}})| 1_{\mc U(w, R_1/2 )}(y), & \quad w \in \Si_j, \\
|y - x_l|1_{\mc U(w, R_1/2 )}(y), & \quad w= x_l.
\end{cases}
\end{equation}
Here, for any $y \in \mb R^d$, $(\wt y^{CS_{w}},y_d^{CS_{w}})$ are the coordinates of $y$ in $CS_w$.
If $w \in \Si_j$, we have
\begin{equation} 
B(w,R_1/2) \cap D = B(w, R_1/2)  \cap \{y= (\widetilde{y},y_d) \in CS_w: y_d > \psi_w(\widetilde{y})\}.
\end{equation} 
The following result 
extends Proposition \ref{key prop: perturbation} to general $C^{1, \beta}$ open sets.

\begin{prop} \label{perturbation: general case}
Suppose $w \in \partial D_\kappa$ is fixed. For any $p \in [\alpha - 1, \alpha) \cap (0, \alpha)$ when $w\in \partial D$ and  any $p \in (0, \alpha)$ when $w\in \cup_{l=1}^{m_1}\{x_l\}$, there exist constants $R \in \big(0, (R_1 \wedge 1)/2\big)$ and $\ga>0$, 
depending only on the characteristics of $D$,
and positive constant $A_2$, depending only on $p$ and the characteristic of $D$, such that for all $x \in B(w, R) \cap D_\kappa$, it holds that:
\begin{equation}
\begin{aligned}
& \bigg| \mathcal{A}(d, -\alpha) \, {\rm p.v.} 
\int_{D}
\frac{h_w(y)^{p} - h_w(x)^{p}}{|y-x|^{d+\alpha}} \, dy 
- C_k(d, \alpha, p) \, h_w(x)^p \, \delta_{D_\kappa}(x)^{- \alpha} \bigg| \\
& \leq A_2 \big(1 + \delta_{D_\kappa}(x)^{p-\alpha+\gamma} + |\log(\delta_{D_\kappa}(x))| \big), 
\quad k=0 \ 
\text{if} \ w \in \partial D; \quad k=1 \ \text{if} \ w \in \cup_{l=1}^{m_1}\{x_l\}.
\end{aligned}
\end{equation}
\end{prop}
\begin{proof}
We define the untruncated vertical distance $\bar{h}_w$ by :
\begin{equation}
\overline{h}_w(y): = 
\begin{cases}
|y_d^{CS_{w}} - \psi_w(\wt y^{CS_{w}})|, & \quad w \in \Si_j, j=1, \dots, m_0,\\
|y - x_l|, & \quad w= x_l, l=1, \dots, m_1.
\end{cases}
\end{equation}
\textbf{Case 1}: $w \in \Si_j, j=1, \dots, m_0$. 
By Proposition \ref{key prop: perturbation}, for any $p \in [\alpha - 1, \alpha) \cap (0, \alpha)$, there exist positive constants 
$r_1$ and $\ga$, depending only on the characteristics of $D$, and positive constant $A_1$, depending only on $p$ and the characteristics of $D$,
such that for any $x \in B(w,r_1\wedge R_1) \cap D$ (note that in this case $\delta_D(x) = \delta_{D_\kappa}(x)$),
\begin{align*}
& \bigg|\mathcal{A}(d,-\alpha){\ \rm p.v.}\int_{D_{\psi_j^w}} \frac{\overline{h}_w(y)^{p}-\overline{h}_w(x)^{p}}{|y-x|^{d+\alpha}}dy - C_0(d,\alpha, p) \overline{h}_w(x)^p \delta_{D}(x)^{- \alpha}\bigg| \\
&\le A_1(1+ \delta_{D}(x)^{p-\alpha+\gamma} + |\log(\delta_{D}(x))|).
\end{align*}
Note that if  $x \in B(w,R_1/2)\cap D$, then $\overline{h}_w(x) = h_w(x)$. 
Define $$R:= \frac{R_1\wedge r_1}{4}.$$
Then for any $x \in B(w,R)\cap D$,
\begin{align*}
& \bigg|\mathcal{A}(d,-\alpha){\ \rm p.v.}\int_{D} \frac{h_w(y)^{p}-h_w(x)^{p}}{|y-x|^{d+\alpha}}dy - C_0(d,\alpha,p) h_w(x)^p \delta_{D}(x)^{- \alpha}\bigg| \\
& \le \bigg|\mathcal{A}(d,-\alpha){\ \rm p.v.}\int_{D_{\psi_j^w}} \frac{\overline h_w(y)^{p}-\overline h_w(x)^{p}}{|y-x|^{d+\alpha}}dy - C_0(d,\alpha,p) \overline h_w(x)^p \delta_{D}(x)^{- \alpha}\bigg| + \mathcal{A}(d,-\alpha) \times \\
& \bigg( \int_{B(w,\frac{R_0 \wedge r_1}{2})^c \cap D_{\psi_j^w}} \frac{|\overline h_w(y)^{p}-\overline h_w(x)^{p}|}{|y-x|^{d+\alpha}}dy + \int_{B(w,\frac{R_0 \wedge r_1}{2})^c\cap D} \frac{|\overline h_w(y)^{p}-\overline h_w(x)^{p}|}{|y-x|^{d+\alpha}}dy \bigg) \\
& \lesssim 1+ \delta_D(x)^{p-\alpha +\gamma} + |\log (\delta_D(x))|.
\end{align*}
Here, the last inequality follows from Proposition \ref{key prop: perturbation} and similar argument as \eqref{non-singular argument}. 

\noindent \textbf{Case 2}: $w = x_l$ for some $l = 1,\cdots,m_1$. This case follows from the same argument except that we use $\mb R^d \backslash \{x_l\}$ instead of $D_{\psi_j^w}$ and Proposition \ref{harmonic: higher codimension}.
\end{proof}

\section{Proof of the main Theorem}
By Theorem \ref{t:vthke}, it suffices to show that for any $T>0$,
\begin{equation}\label{eqn:main target}
 \mb{P}^{x}(\tau_{D_\kappa}>t) \asymp \prod_{j=1}^{m_0} (1\wedge \frac{\delta_{\Si_j}(x)}{t^{1/\alpha}})^{p_j} \prod_{l=1}^{m_1} (1\wedge \frac{|x-x_l|}{t^{1/\alpha}})^{q_l}
\quad \forall (t, x)\in (0, T)\times D_\kappa,  
\end{equation}
where $p_{j}$ is determined  by $\lambda_{j}$ via $C_0(d,\alpha,p_{j}) = \lambda_{j}$ and $q_l$ is determined  by $\mu_{l}$ via $C_1(d,\alpha,q_{l}) = \mu_{l}$.

From now on, we fix $(t, x)\in (0, T)\times D_\kappa$. 
If $\delta_{D_\kappa}(x) \gtrsim t^{1/\alpha}$, then Lemma \ref{lemma:interior}, we have $\mb{P}^{x}(\tau_{D_\kappa}>t) \asymp 1$. 
Thus we can assume $\delta_{D_\kappa}(x) \lesssim t^{1/\alpha}$.

Let $w = w_x \in \partial D_\kappa$ be such that $\delta_{D_\kappa}(x) = |x - w|$. If $w\in \partial D$, then
\begin{equation} 
B(w,R_1/2) \cap D = B(w, R_1/2)  \cap \{y= (\widetilde{y},y_d) \in CS_w: y_d > \psi_w(\widetilde{y})\},
\end{equation} 
where $\psi_w$ is the $C^{1, \beta}$ function associated with $w$ in Definition \ref{d:c1beta}. 
Recall that by Proposition \ref{perturbation: general case},
for any $p \in [\alpha - 1, \alpha) \cap (0, \alpha)$, there exist constants $R \in \big(0, (R_1 \wedge 1)/2\big)$ and $\ga>0$, 
depending only on the characteristics of $D$, and positive constant $A_2$, depending only on $p$ and the characteristic of $D$, such that for any $z \in \mc U(w,R)$ (note that $\delta_D(z)=\delta_{D_\kappa}(z)$),
\begin{align*}
& \bigg|\mathcal{A}(d,-\alpha){\ \rm p.v.}\int_{D} \frac{h_w(y)^{p}-h_w(z)^{p}}{|y-z|^{d+\alpha}}dy - 
C_0(d,\alpha,p) h_w(z)^p \delta_{D}(z)^{- \alpha}\bigg| \\
&\le A_2(1+ \delta_{D}(z)^{p-\alpha+\gamma} + |\log(\delta_{D}(z))|), 
\end{align*}
where $h_w$ is defined by \eqref{eqn:truncated vertical distance}.

Now we construct appropriate super(sub)-harmonic functions. 
If $w \in \Si_j$ for some $j$, then we 
set $\wt{p_j} =\frac12\left(p_j+((p_j+\gamma)\wedge \alpha )\right)$ and define
\begin{equation}\label{test function: codim 1}
v_1(y):= h_w^{p_j}(y)+h_w^{\wt{p_j}}(y), \quad v_2(y):= h_w^{p_j}(y) - h_w^{\wt{p_j}}(y).
\end{equation}
Then the following lemma says that $v_1$ is a super-harmonic function and $v_2$ is a sub-harmonic function:

\begin{lemma} \label{test function estimate}
There exists $R_2 \in (0,R)$, depending only on the characteristics 
of $D$ and $p_j$ 
such that for any $z \in \mc U(w,R_2) = B(w,R_2)\cap D$,
\begin{equation}
\begin{aligned}
&(L^D_\alpha-\kappa)
v_1(z) \gtrsim \delta_D(z)^{\wt{p_j} - \alpha}, \\
&(L^D_\alpha-\kappa)
v_2(z) \lesssim -\delta_D(z)^{\wt{p_j} - \alpha},
\end{aligned}
\end{equation}
with implicit constants depend only on the characteristics of $D$.
\end{lemma}
\begin{proof}
Recalling $\lambda_{j} = C_0(d,\alpha,p_j)$ and $\delta_D(z) = \delta_{\Si_j}(z)$ for 
$z\in \mathcal{U}(w, R)$, we have
\begin{align*}
&(L^D_\alpha-\kappa)
v_1(z)\\
& = \mathcal{A}(d,-\alpha){\ \rm p.v.}\int_{D} \frac{h_w(y)^{p_j} + h_w(y)^{\wt p_j}- h_w(z)^{p_j} - h_w(z)^{\wt p_j}}{|y-z|^{d+\alpha}}dy - ( h_w(z)^{p_j} + h_w(z)^{\wt p_j}) \kappa(z) \\
& =\mathcal{A}(d,-\alpha){\ \rm p.v.}\int_{D} \frac{h_w(y)^{p_j}-h_w(z)^{p_j}}{|y-z|^{d+\alpha}}dy - C_0(d,\alpha,p_j) h_w(z)^{p_j} \delta_{D}(z)^{- \alpha} \\
& + \mathcal{A}(d,-\alpha){\ \rm p.v.}\int_{D} \frac{h_w(y)^{\wt p_j}-h_w(z)^{\wt p_j}}{|y-z|^{d+\alpha}}dy - C_0(d,\alpha,\wt p_j) h_w(z)^{\wt p_j} \delta_{D}(z)^{- \alpha} \\
& + (C_0(d,\alpha,\wt p_j) - C_0(d,\alpha,p_j)) h_w(z)^{\wt p_j} \delta_{D}(z)^{- \alpha} \\
& - \sum_{j' \neq j} \lambda_{j'} \delta_{\Si_{j'}}(z)^{-\alpha} v_1(z) - \sum_{l=1}^{m_1} |z - x_l|^{-\alpha} v_1(z)\\
& \ge -2A_2(1+ \delta_D(z)^{p_j-\alpha + \gamma} + |\log(\delta_D(z))| + \delta_D(z)^{\wt p_j-\alpha + \gamma})+ c_1 \delta_D(z)^{\wt p_j-\alpha} - c_2.
\end{align*}
For the inequality above, we used 
(i) Proposition \ref{perturbation: general case}; (ii) the strict increasing property of $C_0(d,\alpha,\cdot)$; and (iii) the facts that
$\delta_{\Si_{j}}(z) \ge R$ and $h_w(z) \asymp \delta_D(z)$ for $z \in \mc U(w,R)$. 
Since $\wt p_j \in (p_j, (p_j+\gamma)\wedge \alpha)$, $\delta_D(z)^{\wt p_j-\alpha}$ is the dominant term in the last line of the display above as $\delta_D(z)$ goes to zero. 
Note that the constants $c_1, c_2$  depend only on $p_j$ and the characteristics of $D$. 
Thus we can choose a constant $R_2 \in (0,R)$ depending only on 
$p_j$ and the characteristics of $D$ such that
$$
(L^D_\alpha-\kappa)
v_1(z) \gtrsim \delta_D(z)^{\wt p_j-\alpha}, z \in \mc U(w, R_2).
$$
Using the same arguments, we can show that 
$$
(L^D_\alpha-\kappa)
v_2(z) \lesssim -\delta_D(z)^{\wt p_j-\alpha}, z \in \mc U(w, R_2).$$
\end{proof}
Note that $v_1$ and $v_2$ do not belong to the domain 
of the generator $Y$, 
so Dynkin's formula cannot be applied directly. However, by employing a standard mollification procedure, see the discussion after \cite[Lemma 3.1]{CKSV20}, we can derive the following result, whose proof is provided in Appendix \ref{app:proof of mollify}.
\begin{lemma}\label{mollify: codim 1}
There exist $R_3 \in (0,R_2/2)$ depending only on $p_j$ and the characteristics of $D$, uniformly bounded $\{f_n\}, \{g_n\}\subseteq 
C_c^{\infty}(D_\kappa)$
and a strictly decreasing positive sequence $a_n \to 0$, such that 
\begin{itemize}
\item $\lim_{n\to \infty}f_n = v_1,\ \lim_{n\to \infty}g_n = v_2$ pointwisely on 
$\mc U(w,R_3)$.
\item For sufficiently large $n$, 
we have $(L^D_\alpha-\kappa)f_n(z)\ge 0$ and $(L^D_\alpha-\kappa)g_n(z)\le 0$ for any $z \in \mc U(w,R_3)$ with $\delta_D(z)>a_n$.
\end{itemize}
\end{lemma}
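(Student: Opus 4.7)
The plan is a two-scale regularization: first excise the low-regularity behaviour of $v_1,v_2$ near $\partial D$ at scale $a_n$, then spatially localize via a smooth cutoff $\psi$ vanishing near $\partial B(x_0,\wt r_3)$ so as to suppress the jump introduced by the indicator in the definition of $h_x$, and finally convolve with a standard mollifier at a much smaller scale $\epsilon_n\ll a_n$. The strict inequalities from Lemma \ref{test function estimate} will then absorb the (small) errors produced by this regularization.

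Concretely, fix $\phi\in C_c^\infty(B(0,1))$ with $\phi\ge 0$, $\int\phi=1$, and set $\phi_\epsilon(\cdot)=\epsilon^{-d}\phi(\cdot/\epsilon)$. Choose $\psi\in C_c^\infty(B(x_0,\wt r_3))$ with $\psi\equiv 1$ on $B(x_0,3\wt r_3/4)$, set $r_3=\wt r_3/4$ so that $\mc U(x,r_3)\subset\{\psi=1\}$, pick $a_n=2^{-n}$, and smooth cutoffs $\theta_n:\mb R\to[0,1]$ with $\theta_n\equiv 0$ on $(-\infty,a_n/2]$ and $\theta_n\equiv 1$ on $[a_n,\infty)$. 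Define
$$f_n:=\phi_{\epsilon_n}\ast\bigl(\psi\,v_1\,\theta_n(\delta_D)\bigr),\qquad g_n:=\phi_{\epsilon_n}\ast\bigl(\psi\,v_2\,\theta_n(\delta_D)\bigr),$$
with $\epsilon_n\le a_n/8$ to be specified. Then $f_n,g_n\in C_c^\infty(\mb R^d)$ have supports inside $\{y\in D:\delta_D(y)\ge a_n/4\}$, so they belong to $\mc D(\mc L^\kappa)$, and are uniformly bounded by $\|v_1\|_{L^\infty}+\|v_2\|_{L^\infty}<\infty$. For the pointwise convergence, given $z\in\mc U(x,r_3)$ with $\delta_D(z)>0$, take $n$ so large that $a_n<\delta_D(z)/4$ and $\epsilon_n<\delta_D(z)/8$; the integrand defining $f_n(z)$ is then supported in $\{|y-z|<\epsilon_n\}\subset\{\psi=1,\ \theta_n(\delta_D)=1\}$, so $f_n(z)=\phi_{\epsilon_n}\ast v_1(z)\to v_1(z)$ by continuity of $v_1$ at $z$, and the same reasoning yields $g_n(z)\to v_2(z)$.

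The crux is the sign condition. By Lemma \ref{test function estimate}, $\mc L^\kappa(v_1)(z)\ge c_1\delta_D(z)^{q_{1,j}-\alpha}$ and $\mc L^\kappa(v_2)(z)\le -c_1\delta_D(z)^{q_{1,j}-\alpha}$ on $\mc U(x,\wt r_3)$, so it suffices to produce $\epsilon_n$ such that
$$\bigl|\mc L^\kappa(f_n)(z)-\mc L^\kappa(v_1)(z)\bigr|\le\tfrac{c_1}{2}\,\delta_D(z)^{q_{1,j}-\alpha}$$
uniformly on $\{z\in\mc U(x,r_3):\delta_D(z)>a_n\}$, and analogously for $g_n,v_2$. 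One splits the nonlocal integral at $|y-z|=\delta_D(z)/4$. In the near regime $v_1$ is $C^{1,\beta}$ on $B(z,\delta_D(z)/2)$ (since $h_x$ is $C^{1,\beta}$ on $\{\delta_D>0\}\cap B(x_0,R)$), and standard mollification estimates, together with $\beta>(\alpha-1)_+$, bound this contribution by $C\epsilon_n^{\beta-\beta'}\delta_D(z)^{1+\beta'-\alpha}$ for any $\beta'\in(\alpha-1,\beta)$. In the far regime, the crude bound $\|f_n-v_1\|_{L^\infty}\le C a_n^{p_{1,j}}$ (using $|v_1|\lesssim \delta_D^{p_{1,j}}$ near $\partial D$) together with $\int_{|y-z|>\delta_D(z)/4}|y-z|^{-d-\alpha}\,dy\lesssim\delta_D(z)^{-\alpha}$ suffices. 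The killing term contributes $|\kappa(z)(f_n-v_1)(z)|\lesssim\delta_D(z)^{-\alpha}\epsilon_n^{1+\beta}$ on the same set. Coupling $\epsilon_n=\epsilon_n(a_n)$ small enough (for instance $\epsilon_n=a_n^N$ with $N$ sufficiently large) renders all these terms $o\bigl(\delta_D(z)^{q_{1,j}-\alpha}\bigr)$ uniformly on $\{\delta_D(z)>a_n\}\cap\mc U(x,r_3)$, yielding the desired sign.

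The main obstacle is precisely this final uniform error estimate: both $\mc L^\kappa(v_1)(z)$ and the perturbation coming from $\kappa$ blow up as $\delta_D(z)\to 0$, so one must exploit the $C^{1,\beta}$ regularity of $h_x$ away from $\partial D$ (together with the assumption $\beta>(\alpha-1)_+$) to ensure that the mollification error shrinks faster than the singularity of the killing potential $\kappa(z)\asymp\delta_D(z)^{-\alpha}$. This constraint is what couples $\epsilon_n$ to $a_n$ in the definition of $f_n,g_n$.
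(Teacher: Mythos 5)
Your overall strategy (truncate $v_i$ near $\partial D$, mollify, and let Lemma \ref{test function estimate} absorb the errors) is the right family of argument and is close in spirit to the paper's, but there is a genuine gap in the choice of scales. You truncate with $\theta_n(\delta_D)$ at scale $a_n$ --- the \emph{same} scale at which you then demand the sign condition on $\{z\in\mc U(x,r_3):\delta_D(z)>a_n\}$. For $z$ with $\delta_D(z)$ only slightly larger than $a_n$, the set $\{\delta_D<a_n\}$ where $f_n$ differs from $\phi_{\epsilon_n}\ast(\psi v_1)$ comes within distance $o(a_n)$ of $z$, so this cutoff error lands in your ``near regime'' and is not a mollification error at all: writing $w:=(1-\theta_n(\delta_D))v_1\ge 0$, one has $\mc L^\kappa(f_n)(z)=\mc L^\kappa(\phi_{\epsilon_n}\ast(\psi v_1))(z)-\int_D \phi_{\epsilon_n}\ast w(y)\,|y-z|^{-d-\alpha}dy$, and the subtracted term (which has the \emph{wrong} sign for $f_n$) is of size at least $a_n^{p_{1,j}-\alpha}$ when $\delta_D(z)\asymp a_n$ (and worse, $\gtrsim a_n^{p_{1,j}-1}(\delta_D(z)-a_n)^{1-\alpha}$ when $\alpha>1$ and $\delta_D(z)\downarrow a_n$). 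Since $q_{1,j}>p_{1,j}$, the good term $c_1\delta_D(z)^{q_{1,j}-\alpha}\asymp a_n^{q_{1,j}-\alpha}$ is much \emph{smaller} than $a_n^{p_{1,j}-\alpha}$, so it cannot absorb this error; in fact $a_n^{p_{1,j}-\alpha}\to\infty$. The fix is a third scale: truncate at $b_n$ with $b_n\ll a_n$ (e.g.\ $b_n=a_n^{M}$), so the truncation region stays at distance $\gtrsim a_n$ from every evaluation point and contributes only $\lesssim b_n^{p_{1,j}}(a_n-2b_n)^{-d-\alpha}\to 0$, which is a harmless $o(1)$ against the main term (bounded below by the constant $r_3^{q_{1,j}-\alpha}$ after shrinking $r_3$). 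This is exactly how the paper's proof proceeds, with sequences $a_n,b_n,c_n$ coupled by \eqref{sequence assump: 1}--\eqref{sequence assump: 3}.

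Separately, your near-regime treatment of the pure mollification error relies on $v_1\in C^{1,\beta}(B(z,\delta_D(z)/2))$ with ``standard mollification estimates''; the stated bound $C\epsilon_n^{\beta-\beta'}\delta_D(z)^{1+\beta'-\alpha}$ omits the factor $[\nabla v_1]_{C^{0,\beta}(B(z,\delta_D(z)/2))}\asymp\delta_D(z)^{p_{1,j}-1-\beta}$, and one must also justify that the $\beta$-seminorm of $\nabla(\theta_n(\delta_D))$ is controlled (this uses $C^{1,\beta}$ regularity of $\delta_D$ near a $C^{1,\beta}$ boundary, which is not free). These are repairable by taking $\epsilon_n$ a high power of $a_n$, but note that the paper avoids this entire near-regime analysis: it uses Fubini to commute the convolution with the translation-invariant part of the operator, obtaining $\mc L^\kappa(\varphi_n\ast v_1)(z)=\int\varphi_n(u)\,\mc L^\kappa(v_1)(z-u)\,du$ up to two explicit shift errors (the domain $D$ vs.\ $D-u$, and $\kappa(z)$ vs.\ $\kappa(z-u)$), and then applies Lemma \ref{test function estimate} at the shifted points $z-u$. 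You would need to either adopt that route or supply the $C^{1,\beta}$ interpolation estimates in full; as written, the sign condition is not established.
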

Using this lemma, we can get the following

\begin{lemma}\label{lemma:sub-super-harmonic}
    For any $z \in \mc U(w, R_3)$ and $r \in (0, R_3)$,
    \begin{equation}\label{ineqn:sub-super-harmonic}
       v_1(z) \le \mb E^z\big(v_1(Y_{\tau_{\mc U(z,r)}})\big),\quad v_2(z) \ge \mb E^z\big(v_1(Y_{\tau_{\mc U(z,r)}})\big).
    \end{equation}
\end{lemma}
\begin{proof}
 For any constant $s>0$, we define 
\begin{equation} \label{interior set}
D^{int}(s):= \{y\in D_\kappa: \delta_{D_\kappa}(y)>s\}
\end{equation}
Let $a_n$ be the sequence given in Lemma \ref{mollify: codim 1}.
Then we have 
$$
D^{int}(a_1) \subset\joinrel\subset \cdots \subset\joinrel\subset D^{int}(a_n) 
\to D_\kappa, 
$$
therefore
\begin{equation}
\lim_{n \to \infty}\tau_{\mc U(z,r) \cap D^{int}(a_n)} = \tau_{\mc U(z,r)}, \mb P - a.s.
\end{equation}
By Dynkin's formula, the bounded convergence theorem and the quasi-left continuity of $Y_t$, 
we have 
\begin{align*}
v_1(z) = \lim_{n\to \infty}f_n(z) & = \lim_{n\to \infty} \bigg[ \mb E^z\big(f_n(Y_{\tau_{\mc U(z,r) \cap D^{int}(a_n)}})\big) - \mb E^z\bigg(\int_0^{\tau_{\mc U(z,r) \cap D^{int}(a_n)}} \mc L^{\kappa}(f_n)(Y_s)ds \bigg) \bigg]\\
& \le \lim_{n\to \infty}\mb E^z\big(f_n(Y_{\tau_{\mc U(z,r) \cap D^{int}(a_n)}})\big) = \mb E^z\big(v_1(Y_{\tau_{\mc U(z,r)}})\big).
\end{align*}
The same argument holds for $v_2$. 
\end{proof}

We are now ready  to prove the main theorem.

\begin{proof}[Proof of Theorem \ref{thm:main}] 
We first assume $\delta_{D_\kappa}(x) = \delta_{\Si_j}(x) = |x-w|$ with $w \in \Si_j$. Define
\begin{equation} \label{radius of t}
c_1 := \frac{R_3 T^{-\frac{1}{\alpha}}}{4},
\quad r = r_t = c_1 t^{\frac{1}{\alpha}}.
\end{equation}
If $\delta_{D_{\kappa}}(x) \ge r$, then by Lemma \ref{lemma:interior}, we have 
$\mb P^x(\zeta >t) \asymp 1$. 
Thus we can assume $\delta_{D_{\kappa}}(x) = |x-w| < r = c_1t^{1/\alpha} < 
R_3/4$. Note that 
\begin{align}\label{hierarchy of inclusion}
 \mc U(x,r)\subseteq \mc U(w,2r) \subseteq \mc U(w, R_3) \subseteq \mc U(w,R).
\end{align}
Since $x \in \mc U(w, R_3)$, by Lemma \ref{lemma:sub-super-harmonic}, we have
\begin{align*}
&  v_1(x) \le \mb E^x\big(v_1(Y_{\tau_{\mc U(x,r)}})\big) \\
& = \mb E^x\big(v_1(Y_{\tau_{\mc U(x,r)}}); Y_{\tau_{\mc U(x,r)}} \in \mc U(x,2r)\big) + \sum_{n\ge 1} \mb E^x\big(v_1(Y_{\tau_{\mc U(x,r)}}); Y_{\tau_{\mc U(x,r)}} \in \mc U(x,2^{n+1}r)\backslash \mc U(x,2^{n}r)\big).
\end{align*}
For the first term, by the L\'evy system formula, we have 
\begin{equation}\label{lower prob estimate}
\begin{aligned}
\mb P^x(Y_{\tau_{\mc U(x,r)}} \in \mc U(x,2r)) \gtrsim \mb E^x \int_0^{\tau_{\mc U(x,r)}} \int_{\mc U(x,2r)} \frac{1}{|Y_s - z|^{d+\alpha}}dzds \gtrsim r^{-\alpha} \mb E^x(\tau_{\mc U(x,r)}).
\end{aligned}
\end{equation}
Moreover, for each $n\ge 1$, we have 
\begin{align*}
v_1(y) \lesssim h_w(y)^{p_j} \lesssim 2^{p_j n}r^{p_j},\ y \in \mc U(x,2^{n+1}r)\backslash \mc U(x,2^{n}r),
\end{align*}
where the first inequality follows from Lemma \ref{comparison: codimension 1}, and the second inequality follows from 
$h_w(y) \lesssim |y-x| + h_w(x) \lesssim 2^n r$. 
Thus 
\begin{align*}
& \mb E^x\big(v_1(Y_{\tau_{\mc U(x,r)}}); Y_{\tau_{\mc U(x,r)}} \in \mc U(x,2^{n+1}r)\backslash \mc U(x,2^{n}r)\big) \\
& \lesssim 2^{np_j}r^{p_j} \mb P^x(Y_{\tau_{\mc U(x,r)}} \in \mc U(x,2^{n+1}r)\backslash \mc U(x,2^{n}r)) \\
& \lesssim 2^{n p_j}r^{p_j} \mb E^x \int_0^{\tau_{\mc U(x,r)}} \int_{\mc U(x,2^{n+1}r)\backslash U(x,2^{n}r))}\frac{1}{|Y_s - z|^{d+\alpha}}dzds \\
& \lesssim 2^{n( p_j - \alpha)}r^{p_j} r^{-\alpha} \mb E^x(\tau_{\mc U(x,r)}) \\
& \lesssim 2^{n( p_j - \alpha)}r^{p_j} \mb P^x(Y_{\tau_{\mc U(x,r)}} \in \mc U(x,2r)),
\end{align*}
where in the last inequality, we used \eqref{lower prob estimate}. Therefore,
\begin{align*}
& \mb E^x\big(v_1(Y_{\tau_{\mc U(x,r)}})\big) \\
& = \mb E^x\big(v_1(Y_{\tau_{\mc U(x,r)}}); Y_{\tau_{\mc U(x,r)}} \in \mc U(x,2r)\big) + \sum_{n\ge 1} \mb E^x\big(v_1(Y_{\tau_{\mc U(x,r)}}); Y_{\tau_{\mc U(x,r)}} \in \mc U(x,2^{n+1}r)\backslash \mc U(x,2^{n}r)\big) \\
& \lesssim r^{p_j} \mb P^x(Y_{\tau_{\mc U(x,r)}} \in \mc U(x,2r)) + \sum_{n \ge 1} 2^{n(\wt p_j - \alpha)}r^{p_j} \mb P^x(Y_{\tau_{\mc U(x,r)}} \in \mc U(x,2r)) \\
& \lesssim r^{p_j} \mb P^x(Y_{\tau_{\mc U(x,r)}} \in \mc U(x,2r)) \le r^{p_j} 
\mb P^x(Y_{\tau_{\mc U(x,r)}} \in D_\kappa).
\end{align*}
Therefore by Lemma \ref{lifetime comparison}, we have 
\begin{align*}
\mb P^x(\zeta > t) 
\asymp \mb P^x(Y_{\tau_{\mc U(x,r)}} \in D_\kappa)
\gtrsim \frac{v_1(x)}{r^{p_j}} \gtrsim \frac{\delta_{D_{\kappa}}(x)^{p_j}}{t^{1/\alpha}},
\end{align*}
which gives the desired lower bound.

For the upper bound, we use
\begin{equation}
v_2(x) \ge \mb E^x(v_2(Y_{\tau_{\mc U(x,r)}})).
\end{equation}

\begin{center}
\begin{figure}[H]
\begin{tikzpicture}
\draw [->] (0,0) -- (8,0) node [right] {$\wt y$};
\node at (-.5,0) {$w$};
\node at (0,0.7) {$x$};
\fill (0,0.5) circle (.3mm);
\fill (0,0) circle (.3mm);
\node at (4,3) {$\mc U(w,R_3)$};
\node at (1.8,1.8) {$\mc U(x,r)$};
\node at (5.5,2.6) {$\psi_w$};
\node at (-2, 2.3) {$B_r$};
\draw[scale=0.8, domain=-8:8, smooth, variable=\x, blue] plot ({\x}, {abs(\x)*sqrt(abs(\x))/5});
\draw[scale=0.8, domain=-1.84:1.84, smooth, variable=\x, black] plot ({\x}, {0.5 + sqrt(1.84*1.84 - \x*\x)});
\draw[scale=0.8, domain=-4.59:4.59, smooth, variable=\x, black] plot ({\x}, {sqrt(25 - \x*\x)});
\draw[thick, black] (-2, 2.3) circle (0.5);
\end{tikzpicture}
\caption{Illustration of the choice of $B_r$.}
\label{figure:ball}
\end{figure}
\end{center}
By  \eqref{hierarchy of inclusion}, 
we can choose a ball $B_r\subseteq \mc U(w,R)$ centered at some point inside $\mc U(w,R)$ with radius comparable to $r \asymp t^{1/\alpha}$, see Figure \ref{figure:ball} for an illustration, such that 
\begin{align*}
& B_r \cap \mc U(x,r) = \emptyset,  \quad \dist(B_r, \mc U(x,r)) \gtrsim r, \\
& \delta_{D_{\kappa}}(y) \asymp r,\ \forall y \in B_r.
\end{align*} 
Hence
\begin{align*}
\delta_{D_{\kappa}}(x)^{p_j} & \gtrsim v_2(x) \ge \mb E^x(v_2(Y_{\tau_{\mc U(x,r)}})) \ge E^x(v_2(Y_{\tau_{\mc U(x,r)}}); Y_{\tau_{\mc U(x,r)}} \in B_r) \\
& \gtrsim (r^{p_j} - r^{\wt p_j}) \mb P^{x}(Y_{\tau_{\mc U(x,r)}} \in B_r) \gtrsim r^{p_j}\mb P^{x}(Y_{\tau_{\mc U(x,r)}} \in B_r), 
\end{align*}
where in the last inequality we used $p_j<\wt p_j$. Thus
by Lemma \ref{lifetime comparison}, we have 
\begin{align*}
\mb P^x(\zeta > t) \asymp \mb P^x(Y_{\tau_{\mc U(x,r)}} \in D_\kappa)
\asymp \mb P^{x}(Y_{\tau_{\mc U(x,r)}} \in B_r) \lesssim \frac{\delta_{D_{\kappa}}(x)^{p_j}}{t^{1/\alpha}}.
\end{align*}
By \eqref{e:R1}, we have 
\begin{align*}
\delta_{\Si_{j'}}(x) \ge R_1/2, \quad |x-x_l|\ge R_1/2, \quad j' \neq j, l=1, \dots, m_1.
\end{align*}
Therefore, we have 
\begin{align*}
1 \wedge \frac{\delta_{\Si_{j'}}(x)}{t^{1/\alpha}} \asymp 1, \quad 1 \wedge \frac{|x-x_l|}{t^{1/\alpha}} \asymp 1,
\quad j'\neq j; \ 1\le l\le m_1.
\end{align*}
In conclusion, we have shown that 
\begin{align*}
\mb P^x(\zeta >t) 
\asymp \prod_{j=1}^{m_0} (1\wedge \frac{\delta_{\Si_j}(x)}{t^{1/\alpha}})^{p_j} \prod_{l=1}^{m_1} (1\wedge \frac{|x-x_l|}{t^{1/\alpha}})^{q_l}.
\end{align*}

Finally we discuss the case when $w = x_l$ for some $l$. In this case, we choose the sub- and super-harmonic functions given by 
\begin{equation}
\begin{aligned}
& \hat{v}_1(y) := h_w(y)^{q_l}+ h_w(y)^{\wt q_l},\quad \hat{v}_2(y):= h_w(y)^{q_l} - h_w(y)^{\wt q_l},\quad \wt q_l =\frac12(q_l+\alpha).
\end{aligned}
\end{equation}
Applying the same procedure as the case of $C^{1,\beta}$ open set, we arrive at the same conclusion that 
\begin{align*}
\mb P^x(\zeta >t) 
\asymp \prod_{j=1}^{m_0} (1\wedge \frac{\delta_{\Si_j}(x)}{t^{1/\alpha}})^{p_j} \prod_{l=1}^{m_1} (1\wedge \frac{|x-x_l|}{t^{1/\alpha}})^{q_l}.
\end{align*}
This concludes the proof of Theorem \ref{thm:main}.   
\end{proof}

\appendix
\section{Proof of Lemma \ref{lifetime comparison}} \label{app:proof of lifetime}
\begin{proof} 
First, note that 
\[
\{\zeta > t\} \subseteq \{\tau_{\mc U(x,at^{1/\alpha})}>t\} \cup 
\{Y_{\tau_{\mc U(x,at^{1/\alpha})}} \in D_\kappa\}.
\]
This gives the estimate
$$
\mb{P}^x(\zeta > t) \le \mb{P}^x(\tau_{\mc U(x,at^{1/\alpha})}>t) + 
\mb{P}^x(Y_{\tau_{\mc U(x,at^{1/\alpha})}} \in D_\kappa) 
\le \frac{\mb E^x[\tau_{\mc U(x,at^{1/\alpha})}]}{t} + 
\mb{P}^x(Y_{\tau_{\mc U(x,at^{1/\alpha})}} \in D_\kappa).
$$
We now construct a suitable ball $W$ inside 
$D_\kappa$ that is sufficiently separated from $\mc U(x,at^{1/\alpha})$ 
but not too separated. Let $y\in D_\kappa$ and choose 
$W = B(y,b t^{1/\alpha}) \subseteq D_\kappa$ 
for some $b<a/8$, such that 
\begin{equation}\label{choose W: far}
B(y,2b t^{1/\alpha}) \subseteq \mc A(x,(a+6b)t^{1/\alpha}, (a+2b)t^{1/\alpha})\cap D_\kappa.
\end{equation}
With this choice, $W$ satisfies the four properties required by the lemma.

By condition {\bf (U)}, see \eqref{key upper bound}, we can choose a function 
$f \in \mc D\big(\overline{\mc A(x, (a+b)t^{1/\alpha}, a t^{1/\alpha})}\cap \overline{D}, \mc A(x,(a+2b)t^{1/\alpha}, (a-b)t^{1/\alpha})\cap\overline{D}\big)$ 
such that 
\begin{equation}
\sup_{z \in \overline{D}} 
\mc L^{\overline{D}}_\alpha f(z) \lesssim \frac{1}{t}.
\end{equation}
By the definition of $Y$ and Dynkin's formula (see \cite[(2.11)]{Bogdan15} and the proof of \cite[(4.6)]{Bogdan15}), we have
\begin{align*}
& \mb P^x\big(Y_{\tau_{\mc U(x,at^{1/\alpha})}} \in \mc U(x,(a+b)t^{1/\alpha})\big) \\
&=\mb E^x\left[\exp\left(-\int^{\tau_{\mc U(x,at^{1/\alpha})}}_0
\kappa(X_s)ds\right) ; 
X_{\tau_{\mc U(x,at^{1/\alpha})}} \in \mc U(x,(a+b)t^{1/\alpha})\right]\\
& \le \mb E^x\left[f(X_{\tau_{\mc U(x,at^{1/\alpha})}})\exp\left(-\int^{\tau_{\mc U(x,at^{1/\alpha})}}_0
\kappa(X_s)ds\right)\right]\\
&= \mb E^x\bigg(\int_0^{\tau_{\mc U(x,at^{1/\alpha})}}(
\mc L^{\overline{D}}_\alpha  f(X_s) 
- \kappa(X_s)f(X_s))ds\bigg) \\
& \le \sup_{z\in D} 
\mc L^{\overline{D}}_\alpha f(z)
\mb E^x[\tau_{\mc U(x,at^{1/\alpha})}] \lesssim \frac{\mb E^x(\tau_{\mc U(x,at^{1/\alpha})})}{t}.
\end{align*}
Using the L\'evy system formula, for the event 
$Y_{\tau_{\mc U(x,at^{1/\alpha})}} \in D_\kappa \setminus \mc U(x,(a+b)t^{1/\alpha})$, 
we have 
\begin{align*}
& \mb P^x(Y_{\tau_{\mc U(x,at^{1/\alpha})}} \in 
D_\kappa \backslash \mc U(x,(a+b)t^{1/\alpha})) \\
&=\mb E^x \left[\int_0^{\tau_{\mc U(x,at^{1/\alpha})}} 
\int_{D_\kappa \backslash \mc U(x,(a+b)t^{1/\alpha})}J(Y_s,w)m(dw)ds\right] \\
& \lesssim \mb E^x(\tau_{\mc U(x,at^{1/\alpha})})\int_{|w-x|\ge b t^{1/\alpha}}|w-x|^{-d-\alpha}m(dw) \lesssim \frac{\mb E^x(\tau_{\mc U(x,at^{1/\alpha})})}{t}.
\end{align*}
Similarly, by choosing the $W$ satisfying \eqref{choose W: far}, we have 
\begin{align*}
\mb P^x(Y_{\tau_{\mc U(x,at^{1/\alpha})}} \in W) \asymp \frac{\mb E^x(\tau_{\mc U(x,at^{1/\alpha})})}{t}.
\end{align*}
Combining these estimates, we see
\begin{align*}
& \mb{P}^x(\zeta > t) \le \frac{\mb E^x(\tau_{\mc U(x,at^{1/\alpha})})}{t} + 
\mb{P}^x(Y_{\tau_{\mc U(x,at^{1/\alpha})}} \in D_\kappa) \\
&= \frac{\mb E^x(\tau_{\mc U(x,at^{1/\alpha})})}{t} + \mb P^x(Y_{\tau_{\mc U(x,at^{1/\alpha})}} \in \mc U(x,(a+b)t^{1/\alpha})) 
+ \mb P^x(Y_{\tau_{\mc U(x,at^{1/\alpha})}} \in D_\kappa \backslash \mc U(x,(a+b)t^{1/\alpha})) \\
& \lesssim \frac{\mb E^x(\tau_{\mc U(x,at^{1/\alpha})})}{t} \asymp \mb P^x(Y_{\tau_{\mc U(x,at^{1/\alpha})}} \in W).
\end{align*}

For the lower bound, note that $\zeta \ge \tau_{\mc U(x,(a+6b)t^{1/\alpha})}$. Using the strong Markov property and the construction of $W$, we have:
\begin{align*}
& \mb P^x(\zeta > t) \ge \mb P^x(\tau_{\mc U(x,(a+6b)t^{1/\alpha})} > t) \\
& \ge \mb E^x\big(\mb P^{Y_{\tau_{\mc U(x,at^{1/\alpha})}}}(\tau_{\mc U(x,(a+6b)t^{1/\alpha})} > t); Y_{\tau_{\mc U(x,at^{1/\alpha})}} \in W\big) \\
& \ge \inf_{z \in W} \mb P^z(\tau_{B(y,2bt^{1/\alpha})}>t) \mb P^x(Y_{\tau_{\mc U(x,at^{1/\alpha})}} \in W) \gtrsim \mb P^x(Y_{\tau_{\mc U(x,at^{1/\alpha})}} \in W),
\end{align*}
where we used that $B(y,2bt^{1/\alpha}) \subseteq \mc U(x,(a+6b)t^{1/\alpha})$ (from \eqref{choose W: far}) and the interior estimate (Lemma \ref{lemma:interior}) to get the uniform lower bound.

Thus, we have shown both the upper and lower bounds hold, completing the proof.
\end{proof}

\section{Proof of Lemma \ref{mollify: codim 1}}\label{app:proof of mollify}

\begin{proof}[Proof of Lemma \ref{mollify: codim 1}]
In this proof, we assume $z\in \mc U(w, R_2/2)$.
     First we choose a nonnegative function $\varphi \in C_c^{\infty}(\mb R^d)$ such that $\text{supp}\ \varphi \subseteq B(0,1)$ and $\int_{\mb R^d} \varphi(u)du = 1$. 
Let $\{a_n\},\{b_n\},\{c_n\}$ be three sequences of positive numbers, all tending to zero, to be specified later.
Recall the interior set $D_{int}(s)$ is defined in \eqref{interior set}, we define
\begin{align*}
& \varphi_n(y): = c_n^{-d} \varphi(c_n^{-1} y),  f_n := \varphi_n *(v_1 1_{D^{int}(b_n)}),\ g_n := \varphi_n *(v_2 1_{D^{int}(b_n)}).
\end{align*}
We claim that for appropriately chosen $\{a_n\},\{b_n\},\{c_n\}$, there exists a constant 
$R_3 \in (0, R_2)$ depending only on the characteristics of $D$ such that for sufficiently large $n$,  
\begin{align*}
(L^D_\alpha-\kappa)f_n(z)\ge 0,\ (L^D_\alpha-\kappa)g_n(z)\le 0, 
\quad z \in \mc U(w, R_3) \cap D^{int}(a_n).
\end{align*}
Indeed, it holds that
\begin{align*}
&(L^D_\alpha-\kappa)f_n(z)=(L^D_\alpha-\kappa)(\varphi_n * v_1)(z)-(L^D_\alpha-\kappa)(\varphi_n * v_1 - f_n)(z) \\
&= (L^D_\alpha-\kappa)(\varphi_n * v_1)(z)+ \mc A(d,-\alpha) \times \\
&  {\ \rm p.v.}\int_{D} \int_{\mb R^d} \varphi_n(u) \frac{\big(1_{D^{int}(b_n)}(y-u)- 1 \big)v_1(y-u) - \big(1_{D^{int}(b_n)}(z-u) - 1 \big)v_1(z-u)}{|y-z|^{d+\alpha}} dudy \\
& + \kappa(z) \int_{\mb R^d} \varphi_n(u)\big(1_{D^{int}(b_n)}(z-u) - 1 \big)v_1(z-u)du\\
& =: I + II + III.
\end{align*}
Let us discuss $III, II$ first, which are simpler. Since $\text{supp}\ \varphi_n \subseteq B(0,c_n)$, if
\begin{equation}\label{sequence assump: 1}
\lim_{n\to \infty} \frac{c_n}{a_n} = 0,\quad \lim_{n\to \infty} \frac{b_n}{a_n} = 0,
\end{equation} 
for $n$ large enough we have $z - u \in D^{int}(b_n)$ when $z \in D^{int}(a_n)$ and $|u|<c_n$, and thus
$$
1_{D^{int}(b_n)}(z-u) - 1 = 0.
$$
Hence $III$ is equal to zero for $n$ large enough. 

For the estimate of $II$, we have 
\begin{align*}
|II|& = \mc A(d,-\alpha) {\ \rm p.v.}\int_{D} \int_{\mb R^d} \varphi_n(u) \frac{\big(1 - 1_{D^{int}(b_n)}(y-u)\big)v_1(y-u)}{|y-z|^{d+\alpha}} dudy \\
& \lesssim \int_{y \in \mc U(w,R), 
\delta_{D_\kappa}(y)\le 2b_n} 
\int_{\mb R^d} \varphi_n(u)\frac{\delta_{D}(y-u)^{p_j}}{|y-z|^{d+\alpha}} du dy \\
& \lesssim b_n^{p_j} \int_{y \in \mc U(w,R), 
\delta_{D_\kappa}(y)\le 2b_n} 
\frac{1}{|y-z|^{d+\alpha}}dy  \\
& \lesssim b_n^{p_j} (a_n - 2b_n)^{-d-\alpha}.
\end{align*}
In the first inequality above, we used the following facts:
(i) $1 - 1_{D^{int}(b_n)}(y-u) = 0$ 
when $\delta_{D_\kappa}(y)>2b_n$
(since $\delta_{D_\kappa}(y-u) \ge 2b_n  -c_n >b_n$); 
and (ii) $v_1(y) \lesssim \delta_{D_\kappa}(y)^{p_j}$ for any $y \in \mc U(w,R_2)$. 
In the second inequality of the display above, we used $\delta_{D_\kappa}(y-u) \le \delta_{D_\kappa}(y)+|u| \lesssim b_n + c_n \lesssim b_n$.
In the last inequality of the display above, we use the fact that,  for $\delta_{D_\kappa}(y)\le 2b_n$ and $\delta_{D_\kappa}(z)>a_n$, 
we have $|y-z| \ge a_n - 2b_n$, which is greater than $0$ for $n$ large under the assumption \eqref{sequence assump: 1}. 

In conclusion, we have shown that if we assume 
\begin{equation} \label{sequence assump: 2}
\lim_{n \to \infty} b_n^{p_j} (a_n - 2b_n)^{-d-\alpha} = 0,
\end{equation}
then $|II|$ tends to 0 as $n\to\infty$.

Now we turn to the estimate of $I$: 
\begin{align*}
I &= {\rm p.v.}\int_D \int_{\mb R^d} \varphi_n(u) \frac{v_1(y-u) - v_1(z-u)}{|y-z|^{d+\alpha}}du dy - \kappa(z) \int_{\mb R^d} \varphi_n(u)v_1(z-u)du \\
& = \int_{\mb R^d} \bigg({\rm p.v.}\int_D \frac{v_1(y-u) - v_1(z-u)}{|y-z|^{d+\alpha}}dy - \kappa(z-u)v_1(z-u) \bigg) \varphi_n(u)du \\
& +  \int_{\mb R^d} \varphi_n(u)(\kappa(z-u)-\kappa(z))v_1(z-u)du \\
& =\int_{\mb R^d} \bigg( {\rm p.v.}\int_{D-u} \frac{v_1(y) - v_1(z-u)}{|y-(z-u)|^{d+\alpha}}dy - \kappa(z-u)v_1(z-u) \bigg) \varphi_n(u)du \\
& +  \int_{\mb R^d} \varphi_n(u)(\kappa(z-u)-\kappa(z))v_1(z-u)du \\
& = \int_{\mb R^d} \bigg({\ \rm p.v.}\int_{D} \frac{v_1(y) - v_1(z-u)}{|y-(z-u)|^{d+\alpha}}dy - \kappa(z-u)v_1(z-u) \bigg) \varphi_n(u)du \\
& + \bigg({\rm p.v.}\int_{D -u} \frac{v_1(y) - v_1(z-u)}{|y-(z-u)|^{d+\alpha}}dy   - \text{p.v.}\int_{D} \frac{v_1(y) - v_1(z-u)}{|y-(z-u)|^{d+\alpha}}dy \bigg) \\
& +  \int_{\mb R^d} \varphi_n(u)(\kappa(z-u)-\kappa(z))v_1(z-u)du \\
& =: I_1 + I_2 + I_3.
\end{align*}
In the first equality above, we used Fubini's theorem.
To justify the application of Fubini's theorem, we use the standard estimate: for $|u|<c_n$,
\begin{align*}
\sup_{\varepsilon \in (0,1)} \left|\int_{D, |y-z|\ge \varepsilon} \frac{h_w(y-u)^p - h_w(z-u)^p}{|y-z|^{d+\alpha}}dy \right| \lesssim 
\delta_{D}(z-u)^{p-\alpha}.
\end{align*}
This standard estimate can be checked in the same way as in \cite[(3.14)]{CKSV20}. 
Since $v_1 = h_w^{p_j} + h_w^{\wt p_j}$, Fubini's theorem can be applied.

Now we fix $|u|<c_n$. For $z \in \mc U(w, R_2/2)$, when $n$ is large we have $z-u \in  \mc U(w, R_2)$. So we can apply Lemma \ref{test function estimate} to get
\begin{align*}
I_1 & = \int_{\mb R^d} \bigg({\rm p.v.}\int_{D} \frac{v_1(y) - v_1(z-u)}{|y-(z-u)|^{d+\alpha}}dy - \kappa(z-u)v_1(z-u) \bigg) \varphi_n(u)du \gtrsim \delta_{D_{\kappa}}(z-u)^{\wt p_j}.
\end{align*}
To estimate $I_2$,  we first note that if 
$y \in (D-u)\backslash D \subseteq D^c$, $v_1(y) = 0$ by definition. 
Moreover, for $y \in D\backslash (D-u)$, we have $v_1(y) \lesssim |u|^{p_j} \le c_n^{p_j}$. 
For $\delta_{D_{\kappa}}(z)>a_n$, we have $\delta_{D_{\kappa}}(z-u) \gtrsim a_n$, thus $v_1(z-u) \gtrsim a_n^{p_j}$. Therefore for large $n$, under the assumption \eqref{sequence assump: 1},  we have
$$\int_{D \backslash (D-u)} \frac{v_1(y) - v_1(z-u)}{|y-(z-u)|^{d+\alpha}}dy \le 0.$$
Hence we have 
\begin{align*}
I_2 & = {\rm p.v.}\int_{D -u} \frac{v_1(y) - v_1(z-u)}{|y-(z-u)|^{d+\alpha}}dy   - {\rm p.v.}\int_{D} \frac{v_1(y) - v_1(z-u)}{|y-(z-u)|^{d+\alpha}}dy \\
& = \int_{(D-u)\backslash D} \frac{v_1(y) - v_1(z-u)}{|y-(z-u)|^{d+\alpha}}dy - \int_{D \backslash (D-u)} \frac{v_1(y) - v_1(z-u)}{|y-(z-u)|^{d+\alpha}}dy \\
& \ge -v_1(z-u) \int_{(D-u)\backslash D} \frac{1}{|y-(z-u)|^{d+\alpha}}dy.
\end{align*} 
Next we try to get an upper bound on the integral above.
Choose $w_u \in \partial D$ (in fact, $w_u \in \Si_{j}$) with $\delta_{D_{\kappa}}(z-u) = |z-u - w_u|$ such that 
$$
B(w_u,R_0)\cap D = B(w_u,R_0) \cap \{y = (\wt y, y_d): y_d > 
\psi_{w_u}(\wt y)\},
$$
where $\psi_{w_u}$ is the $C^{1,\be}$ function associated with $w_u$ in Definition \ref{d:c1beta}. Then
\begin{align*}
& \int_{(D-u)\backslash D} \frac{1}{|y-(z-u)|^{d+\alpha}}dy \\
& = 
\int_{(D-u)\backslash D, B(w_u,R_2)} \frac{1}{|y-(z-u)|^{d+\alpha}}dy
+ \int_{(D-u)\backslash D, B(w_u,R_2)^c}
 \frac{1}{|y-(z-u)|^{d+\alpha}}dy \\
& \lesssim \int_{|\wt y|< \frac{\delta_{D_{\kappa}}(z-u)}{2\Lambda}} 
\int_{\psi_{w_u}(\wt y) - |u|}^{\psi_{w_u}(\wt y)} 
|y_d - \delta_{D_{\kappa}}(z-u)|^{-d-\alpha}dy_d d\wt y  \\
& + \int_{\frac{\delta_{D_{\kappa}}(z-u)}{2\Lambda} \le |\wt y|< R_2} 
\int_{\psi_{w_u}(\wt y) - |u|}^{\psi_{w_u}(\wt y)} 
|\wt y|^{-d-\alpha} dy_d d\wt y + \int_{B(z-u, R_2)^c} \frac{1}{|y-(z-u)|^{d+\alpha}}dy  \\
& \lesssim |u|\delta_{D_{\kappa}}(z-u)^{-\alpha - 1} + 1.
\end{align*}
For the last inequality, we used the fact that, when $|\wt y|< \frac{\delta_{D_{\kappa}}(z-u)}{2\Lambda}$,  we have $|\psi^u(\wt y)| \le \frac{1}{2\Lambda} \Lambda \delta_{D_{\kappa}}(z-u)$ and thus
$$|y_d - \delta_{D_{\kappa}}(z-u)|^{-d-\alpha} \lesssim \delta_{D_{\kappa}}(z-u)^{-d-\alpha}.$$
The remaining estimates follow from valuation of the integrals. In summary, we have
\begin{align}\label{e:I2}
I_2 \gtrsim -c_n \delta_{D_{\kappa}}(z-u)^{p_j-\alpha - 1} - \delta_{D_{\kappa}}(z-u)^{p_j} \gtrsim -\frac{c_n}{a_n^{-p_j+\alpha + 1}}- \delta_{D_{\kappa}}(z-u)^{p_j}.
\end{align}
Under the assumption 
\begin{equation}\label{sequence assump: 3}
\lim_{n\to \infty} \frac{c_n}{a_n^{-p_j+\alpha + 1}} = 0,
\end{equation}
we have $I_2\gtrsim - \delta_{D_{\kappa}}(z-u)^{p_j}$ when $n$ is large enough.

For $I_3$, we have
\begin{align*}
I_3& = \bigg|\int_{\mb R^d}\sum_{j}\lambda_{j}(\delta_{\Si_{j}}(z-u)^{-\alpha} -\delta_{\Si_{j}}(z)^{-\alpha} ) \varphi_n(u)v_1(z-u)du \bigg| \\
& \lesssim \int_{\mb R^d} \big|\delta_{\Si_{j}}(z-u)^{-\alpha} -\delta_{\Si_{j}}(z)^{-\alpha} \big|\varphi_n(u)v_1(z-u)du + \int_{\mb R^d}\varphi_n(u)v_1(z-u)du \\
& \lesssim \frac{c_n}{a_n^{-p_j+\alpha +1}}+ \delta_{D_{\kappa}}(z-u)^{p_j} ,
\end{align*}
where for the last inequality, we used the mean-value theorem for the function $f(x) = x^{-\alpha}$ and the fact the $|\delta_{\Si_{j}}(z-u)-\delta_{\Si_{j}}(z)| \lesssim |u|$. 
Under the assumption \eqref{sequence assump: 3}, the first term on the last line of the display above goes to 0 as $n\to\infty$.

In summary, under the assumptions that $\{a_n\},\{b_n\},\{c_n\}$ tend to zero and satisfy \eqref{sequence assump: 1},\eqref{sequence assump: 2} and \eqref{sequence assump: 3}, for 
$n$ large enough we have 
\begin{align*}
(L^D_\alpha-\kappa)f_n(z)
\gtrsim \delta_{D_{\kappa}}(z-u)^{\wt p_j-\alpha} - 1 - \delta_{D_{\kappa}}(z-u)^{p_j} \gtrsim \delta_{D_{\kappa}}(z)^{\wt p_j-\alpha} - 1 - \delta_{D_{\kappa}}(z)^{p_j}.
\end{align*}
Note that since $\delta_{D_{\kappa}}(z)<\wt R_2$ and $\wt p_j-\alpha<0$, we can choose 
$R_3\in (0, R_2)$, depending only on $p_j$ and the characteristics of $D$, 
such that 
for $z\in \mc U(w, R_3)$ and $n$ large enough 
$$
(L^D_\alpha-\kappa)f_n(z)
\gtrsim \delta_{D_{\kappa}}(z)^{\wt p_j-\alpha} \ge 0.$$
The same argument can be applied to $g_n$.  The proof is complete.
\end{proof}

\end{document}